\theoremstyle{definition}
\newtheorem{lemma}{Lemma}[section]
\newtheorem{theorem}[lemma]{Theorem}
\newtheorem{proposition}[lemma]{Proposition}
\newtheorem{corollary}[lemma]{Corollary}
\newtheorem{example}[lemma]{Example}
\newtheorem{remark}[lemma]{Remark}
\newtheorem{definition}[lemma]{Definition}
\newcommand{\F}{F}
\newcommand{\zero}{\hat{0}}			
\newcommand{\A}{\mathscr{A}}			
\newcommand{\B}{\mathscr{B}}
\newcommand{\M}{\mathscr{M}}
\newcommand{\Z}{\mathbb{Z}}			
\newcommand{\R}{\mathbb{R}}			
\newcommand{\rk}{\rho}
\newcommand{\C}{\mathbb{C}}
\newcommand{\G}{G}
\renewcommand{\S}{S}
\newcommand{\cx}{\mathcal{C}}
\newcommand{\x}{\mathrm{x}}
\newcommand{\y}{\mathrm{y}}
\newcommand{\boo}{\mathscr{B}}
\newcommand{\rkG}{\rk_{\G}}
\newcommand{\actson}{\curvearrowright}
\newcommand{\X}{\mathfrak{X}}
\newcommand{\GG}{\mathbb{G}}
\DeclareMathOperator{\at}{A}
\DeclareMathOperator{\Hom}{Hom}
\DeclareMathOperator{\rank}{rk}
\DeclareMathOperator{\cl}{cl}
\newcommand{\st}{\colon}
\newcommand{\into}{\hookrightarrow}		
\newcommand{\ol}[1]{\overline{#1}}
\newcommand{\angles}[1]{\langle #1 \rangle}
\newcommand{\solidnodes}{\tikzstyle{every node}=
[draw,circle,fill=black,minimum size=4pt,inner sep=0pt]}
\newcommand{\labelnodes}{\tikzstyle{every node}=[draw,circle,inner
sep=1pt,scale=.9]}
\newcommand{\opennodes}{\tikzstyle{every node}=[draw,circle,minimum
size=4pt,inner sep=0pt]}
\newcommand{\mylabel}[2]{(#2)\def\@currentlabel{#2}\label{#1}}
\title{Matroid schemes and geometric posets}
\author{Christin Bibby}
\address{Louisiana State University, Baton Rouge, LA, USA}
\email{\url{bibby@math.lsu.edu}}
\subjclass[2010]{
05B35; 
06A07
}
\keywords{
matroid,
geometric lattice,
Tutte polynomial,
abelian arrangement%
}
\begin{document}

\begin{abstract}
The intersection data of a hyperplane arrangement is described by a geometric
lattice, or equivalently a simple matroid. There is a rich interplay between
this combinatorial structure and the topology of the arrangement complement.
In this paper, we characterize the combinatorial structure underlying
an abelian arrangement (such as a toric or elliptic arrangement) by defining a
class of geometric posets and a generalization of matroids called matroid schemes.
The intersection data of an abelian arrangement is encoded in a geometric poset,
and we prove that a geometric poset is equivalent to a simple matroid scheme.
We lay foundations for the theory of matroid schemes, discussing rank, flats,
and independence. We also extend the definition of the Tutte polynomial to this
setting and prove that it satisfies a deletion-contraction recurrence.
\end{abstract}

\maketitle

\section{Introduction}

A matroid is an important combinatorial object which has been particularly
influential in the theory of hyperplane arrangements, 
where it encodes the intersection data of the hyperplanes in an arrangement.
The interplay between this combinatorial structure and the topology of the
arrangement complement has been a primary theme in arrangement theory for
decades. Recently, arrangement theory has broadened to include \textit{abelian
arrangements} (\Cref{def:arr}), such as toric and elliptic analogues of
hyperplane arrangements.
The intersection data of such an arrangement forms a poset, known as the
\textit{poset of layers} (\Cref{def:layers}), but it is not necessarily a
geometric lattice or even a semilattice, thus not encoded in a matroid.

In this paper, we completely characterize the combinatorial structure of an
abelian arrangement in the same way that matroids and geometric lattices do for
central hyperplane arrangements (see \Cref{fig:bigpicture}). 
More explicitly, we define \textit{matroid schemes}
(Definition \ref{def:matroid scheme}) and \textit{geometric posets} (Definition \ref{def:geom}), 
for which:
\begin{enumerate}[$\bullet$]
\item a geometric poset is equivalent to a simple matroid scheme (Theorem
\ref{thm:GP=MS});
\item the Tutte polynomial of a matroid scheme satisfies a deletion-contraction
recurrence (\Cref{thm:tutte});
\item the intersection data of an abelian arrangement is 
encoded in a matroid scheme
(\Cref{thm:arr}).
\end{enumerate}

While a matroid imposes a \textit{rank} function on a Boolean lattice, we define a
matroid scheme by imposing a similar function on a simplicial poset. 
Since every closed interval in a simplicial poset is a Boolean lattice, we
view a matroid prescheme as made up of \textit{local} matroids (\Cref{def:local}) glued
together in some reasonable manner, and a matroid scheme satisfies an
additional global geometric condition. This models our topological
application, since an abelian arrangement is locally a linear arrangement
(\Cref{def:localarr}). Note, however, that a matroid scheme is 
not a scheme in the usual sense of algebraic geometry.

There are other combinatorial structures that have been associated to toric
arrangements, namely arithmetic matroids \cite{moci,DM}, matroids over $\Z$
\cite{FM}, and semimatroids with a group action \cite{DR}.
Pagaria \cite{pagaria} proved that the poset of layers of a toric
arrangement is not determined by either its arithmetic matroid or its matroid
over $\Z$. Our matroid schemes are more robust, and they do determine the poset
of layers.
Matroid schemes and geometric posets complement the work of \cite{DR,dd} on
semimatroids and geometric semilattices with a group
action, by formalizing the structure of the quotient by the group action. 
Indeed, under mild hypotheses, the orbits of a group action on a geometric
semilattice form a geometric poset (\Cref{thm:Gsemi}). Not every matroid scheme
arises in this way (\Cref{ex:nonpos}),
and
the framework presented here is the first approach to understanding the Dowling
posets of \cite{BG} from a matroid perspective.
Dowling posets generalize partition and Dowling lattices, which are fundamental
examples in matroid theory, and they arise naturally from certain arrangements
of submanifolds. We prove that Dowling posets are geometric
posets (\Cref{thm:dowling}), and as such they provide a plethora of examples of
matroid schemes with topological motivation.

\subsection*{Acknowledgements}
The author was supported by NSF DMS-2204299, and thanks
Emanuele Delucchi for many fruitful conversations.

\begin{figure}[hb]
\begin{tikzpicture}[scale=.9]
\node[rotate=270] at (0,1) {$\subseteq$};
\node[rotate=270] at (0,3) {$\subseteq$};
\node[rotate=270] at (5,1) {$\subseteq$};
\node[rotate=270] at (5,3) {$\subseteq$};
\node[rotate=270] at (10,1) {$\subseteq$};
\node[rotate=270] at (10,3) {$\subseteq$};
\tikzstyle{every node} = [draw,rectangle,fill=none]
\tikzset{>=angle 90,shorten >=5pt,shorten <=5pt}
\node[text width=3.2cm,align=center] (linear) at (0,4) {central hyperplane arrangement};
\node[text width=3.1cm,align=center] (affine) at (0,2) {affine hyperplane arrangement};
\node[text width=3.5cm,align=center] (periodic) at (0,-2) {$\G$-periodic hyperplane arrangement};
\node (abelian) at (0,0) {abelian arrangement};
\node (lattice) at (5,4) {geometric lattice};
\node (semilattice) at (5,2) {geometric semilattice};
\node (matroid) at (10,4) {matroid};
\node (semimatroid) at (10,2) {semimatroid};
\node (geometricposet) at (5,0) {geometric poset};
\node (cwmatroid) at (10,0) {matroid scheme};
\node[text width=2.5cm,align=center] (gsemilattice) at (5,-2) {geometric $\G$-semilattice};
\node (gsemimatroid) at (10,-2) {$\G$-semimatroid};
\draw[->] (linear) edge (lattice);
\draw[->] (affine) edge (semilattice);
\draw[->] (abelian) edge (geometricposet);
\draw[->] (periodic) edge (gsemilattice);
\draw (geometricposet.east) edge[bend right,->] (cwmatroid.west);
\draw (geometricposet.east) edge[bend left,<-] (cwmatroid.west);
\draw (semilattice.east) edge[bend right,->] (semimatroid.west);
\draw (semilattice.east) edge[bend left,<-] (semimatroid.west);
\draw (lattice.east) edge[bend right,->] (matroid.west);
\draw (lattice.east) edge[bend left,<-] (matroid.west);
\draw (gsemilattice.east) edge[bend right,->] (gsemimatroid.west);
\draw (gsemilattice.east) edge[bend left,<-] (gsemimatroid.west);
\tikzstyle{every node} = []
\draw (periodic) edge[->] node[right]{$/\G$} (abelian);
\draw (gsemilattice) edge[->] node[right]{$/\G$} (geometricposet);
\draw (gsemimatroid) edge[->] node[right]{$/\G$} (cwmatroid);
\end{tikzpicture}
\caption{The combinatorial structures underlying 
arrangements.}
\label{fig:bigpicture}
\end{figure}

\tableofcontents

\section{Matroids and semimatroids}

We begin by recalling the definitions of matroid and semimatroid on a ground set
$E$. The most convenient definition of a matroid will be in terms of a rank
function on $\boo(E)$, the collection of subsets of $E$. A semimatroid is a
generalization in which the rank function is defined only on a simplicial
complex $\cx\subseteq\boo(E)$.
\begin{definition}\label{def:mat}
A \textbf{matroid} is a pair $(E,\rk)$ where $E$ is a finite set and
$\rk:\boo(E)\to\Z_{\geq0}$ is a function satisfying:
\begin{enumerate}
\item[\mylabel{m1}{\textbf{R1}}] if $X\subseteq E$, then $0\leq \rk(X)\leq |X|$.
\item[\mylabel{m2}{\textbf{R2}}] if $X\subseteq Y\subseteq E$, then
$\rk(X)\leq\rk(Y)$.
\item[\mylabel{m3}{\textbf{R3}}] if $X,Y\subseteq E$, then 
$\rk(X)+\rk(Y)\geq \rk(X\cup Y) + \rk(X\cap Y)$.
\end{enumerate}
\end{definition}

\begin{definition}\label{def:semi}
A \textbf{semimatroid} is a pair $(\cx,\rk)$ where $\cx$ is a finite simplicial complex and $\rk:\cx\to\Z_{\geq0}$ is a function satisfying:
\begin{enumerate}
\item[\mylabel{s1}{\textbf{S1}}] if $X\in\cx$, then $0\leq\rk(X)\leq |X|$.
\item[\mylabel{s2}{\textbf{S2}}] if $X,Y\in\cx$ and $X\subseteq Y$, then
$\rk(X)\leq\rk(Y)$.
\item[\mylabel{s3}{\textbf{S3}}] if $X,Y\in\cx$ and $X\cup Y\in\cx$, then
$\rk(X)+\rk(Y)\geq\rk(X\cup Y)+\rk(X\cap Y)$.
\item[\mylabel{s4}{\textbf{S4}}] if $X,Y\in\cx$ and $\rk(X)=\rk(X\cap Y)$, then
$X\cup Y\in\cx$.
\item[\mylabel{s5}{\textbf{S5}}] if $X,Y\in\cx$ and $\rk(X)<\rk(Y)$, then $X\cup
\{y\}\in\cx$ for some $y\in Y\setminus X$.
\end{enumerate}
\end{definition}

\begin{remark}
Through our definition, we consider the vertices of the simplicial complex $\cx$
to be the ground set of the semimatroid. Ardila \cite[Def. 2.1]{ardila} gave a
slightly more general definition of semimatroid, allowing the ground set to
contain elements that were not in $\cx$.

Semimatroids are equivalent to the quasi-matroids defined by
Kawahara \cite{kawahara}. The motivation behind both Ardila's and Kawahara's
work was to describe the combinatorial structure of an arrangement of affine
hyperplanes.
\end{remark}

\bigskip

\section{Poset terminology and notation}

Let $P$ be a finite partially ordered set, or \textbf{poset}. Given $x\in P$ and
$T\subseteq P$, we use the following notation:
\[T_{\leq x} := \{y\in T\st y\leq x\} 
\hspace{1cm}
\text{and}
\hspace{1cm}
T_{\geq x} := \{y\in T\st y\geq x\}\]
For a subset $T\subseteq P$, let $\bigvee T$ be the set of minimal upper bounds
and $\bigwedge T$ be the set of maximal lower bounds. That is,
\[\bigvee T := \min\{u\in P\st u\geq a,\ \forall a\in T\} 
\hspace{1cm}\text{and}\hspace{1cm}
\bigwedge T := \max\{\ell\in P\st \ell\leq a,\ \forall a\in T\}.\]
If $T=\{x,y\}$, we write $x\vee y := \bigvee T$ and $x\wedge y := \bigwedge T$. 
The poset $P$ is a \textbf{lattice} if $|x\vee y|=1$ and $|x\wedge y|=1$ for any
$x,y\in P$.
The poset $P$ is a \textbf{meet-semilattice} if $|x\wedge y|=1$ for all $x,y\in
P$.

The poset $P$ is \textbf{bounded below} if it has a unique minimum element,
denoted by $\zero$. The poset $P$ is \textbf{ranked} if there is an
order-preserving map $\rank_P:P\to\Z_{\geq 0}$ such that whenever $y$ covers
$x$, $\rank_P(y)=\rank_P(x)+1$. We may assume that, if $P$ is bounded below,
then $\rank_P(\zero)=0$. If $P$ is bounded below and ranked, let \[\at(P):=
\{a\in P\st \rank_P(a)=1\}\] be its set of \textbf{atoms}. 

Two posets $P$ and $Q$ are isomorphic if there is a bijection
$\phi:P\to Q$ such that $x\leq_Py$ if and only if $\phi(x)\leq_Q\phi(y)$.

For a set $T$, denote by $\boo(T)$ the Boolean algebra on $T$, that is, the
lattice whose elements are subsets of $T$ partially ordered by inclusion.
A poset $P$ is a \textbf{simplicial poset} if it is bounded below, ranked,
and for any $x\in P$, the subposet $P_{\leq x}$ is isomorphic to
the Boolean lattice $\boo(\at(P_{\leq x}))$.
The rank function on a simplicial poset $P$ is determined by
$\rank_P(x)=|\at(P_{\leq x})|$, and we will abbreviate $|x|:=\rank_P(x)$
whenever it is understood that $P$ is a simplicial poset.
The reader is warned that we use this notation liberally and rely on context: if
$x$ is an element of a simplicial poset then $|x|$ means its rank in the
simplicial poset, and if $T$ is a finite set then $|T|$ means the cardinality of
the set (equivalently, its rank in the poset $\boo(T)$). For instance, since we
view $x\vee y$ as a set rather than an element, the notation $|x\vee y|$ means
the cardinality of this set.

Let $P$ be a simplicial poset, and let $x,a\in P$ such that $a\leq x$. Then in
the Boolean lattice $P_{\leq x}$, there is a unique element $x\setminus a$ for
which $(x\setminus a)\wedge a=\zero$ and $(x\setminus a)\vee a=x$. This element
$x\setminus a$ is called the \textbf{complement} of $a$ in $P_{\leq x}$, and it
satisfies $|x\setminus a|=|x|-|a|$.
Further note that for elements $x,y$ in a simplicial poset $P$, if $x\vee
y\neq\emptyset$ then $|x\wedge y|=1$, since any lower bound must be a lower
bound in the lattice $P_{\leq u}$ for any $u\in x\vee y$.

\section{Matroid schemes}
Our generalization of matroid uses a simplicial poset $S$ in place of the
Boolean lattice or simplicial complex of a (semi)matroid. The idea is to place a
matroid structure on each Boolean lattice $S_{\leq x}$ (via
\eqref{x1}--\eqref{x3} below, see also \Cref{def:local}), but with
reasonable compatibility among these ``local'' matroid structures (via
\eqref{x4}), and sometimes a global geometric condition (via \eqref{x5},
compare with \eqref{g2} of \Cref{def:geom}).

\begin{definition}\label{def:matroid scheme}
A \textbf{matroid prescheme} is a pair $(\S,\rk)$, where $\S$ is a finite
simplicial poset and $\rk:\S\to\Z_{\geq0}$ is a function satisfying:
\begin{enumerate}
\item[\mylabel{x1}{\textbf{M1}}]
if $x\in \S$ then $0\leq \rk(x) \leq |x|$.
\item[\mylabel{x2}{\textbf{M2}}]
if $x,y\in \S$ and $x\leq y$, then $\rk(x)\leq\rk(y)$.
\item[\mylabel{x3}{\textbf{M3}}]
if $x,y\in \S$ and $u\in x\vee y$, then
$\rk(x)+\rk(y)\geq\rk(u)+\rk(x\wedge y)$.
\item[\mylabel{x4}{\textbf{M4}}]
if $x,y\in \S$ and $\ell\in x\wedge y$ such that
$\rk(x)=\rk(\ell)$, then $x\vee y\neq\emptyset$.
\end{enumerate}
A \textbf{matroid scheme} is a matroid prescheme $(\S,\rk)$ satisfying
the additional condition:
\begin{enumerate}
\item[\mylabel{x5}{\textbf{M5}}]
if $x,y\in \S$ and $\rk(x)<\rk(y)$, then there is some
$a\in\at(\S)$ such that  $a\leq y$, $a\not\leq x$, and $x\vee a\neq\emptyset$.
\end{enumerate}
Two matroid preschemes $(\S,\rk)$ and $(\S',\rk')$ are \textbf{isomorphic} if there is
an isomorphism $\phi:\S\to\S'$ of simplicial posets such that
$\rk(x)=\rk'(\phi(x))$.

\end{definition}

\begin{remark}
A simplicial complex is equivalent to a simplicial meet-semilattice, for which
axioms \eqref{x1}--\eqref{x5} are a straightforward translation of axioms
\eqref{s1}--\eqref{s5}.
In particular, if $\S$ is a lattice or meet-semilattice, then
\Cref{def:matroid scheme} of a matroid
scheme $(\S,\rk)$ is equivalent to \Cref{def:mat,def:semi}
for a matroid or semimatroid, respectively.
\end{remark}

It is straightforward to check that for any matroid prescheme
$\M=(S,\rk)$ and any $x\in S$,
the restriction of $\rk$ to the Boolean lattice $S_{\leq x}$ satisfies
\eqref{m1}--\eqref{m3}. This yields the following definition for our ``local''
matroids.
\begin{definition}\label{def:local}
Let $\M=(S,\rk)$ be a matroid prescheme and $x\in S$. The \textbf{localization} of $\M$
at $x$ is the matroid $\M_x = (S_{\leq x},\rk|_{S_{\leq x}})$.
\end{definition}

\begin{example}\label{ex:cw}
\Cref{fig:ex} depicts the Hasse diagram of two simplicial posets $S$,
and four examples of a function $\rk:S\to\Z_{\geq0}$
indicated by labels at each $x\in S$. 
Examples \eqref{fig:ex1} and \eqref{fig:ex2} are both matroid schemes.
Notice that \eqref{fig:ex2} 
is the only possible
matroid scheme structure on that particular simplicial poset. Indeed, no atom $a$
can have $\rk(a)=0$ by \eqref{x4}, and if a maximal element $u$ had $\rk(u)=2$
it would form a matroid prescheme but not a matroid scheme (see
\eqref{fig:nox5}).

An example which satisfies \eqref{x1}--\eqref{x3} but fails \eqref{x4},
hence is not a matroid prescheme, is depicted in \eqref{fig:nox4}.
This example also demonstrates the necessity of \eqref{x4} in ensuring
that the closure operator (\Cref{def:closure}) and flats 
(\Cref{def:flats}) are well-defined.

\begin{figure}[ht]
\begin{subfigure}[t]{.2\textwidth}
\centering
\begin{tikzpicture}[scale=.8]
\labelnodes
\node (bot) at (0,0.3) {0};
\foreach \x in {-1,0,1} {
\node (\x) at (\x*1.5,1.5) {1};
\node (\x') at (0.75*\x-0.75,3) {2};
};
\node (a) at (1.5,3) {2};
\node (top) at (-.75,4.2) {2};
\draw[-] (bot)--(-1)--(-1')--(top)--(0')--(-1);
\draw[-] (bot)--(0)--(a)--(1)--(bot);
\draw[-] (-1')--(0)--(1')--(1)--(0');
\draw[-] (1')--(top);
\end{tikzpicture}
\caption{matroid scheme}
\label{fig:ex1}
\end{subfigure}
\hfill
\begin{subfigure}[t]{.2\textwidth}
\centering
\begin{tikzpicture}[scale=.9]
\labelnodes
\foreach \x in {1,2,3,4} {
\node (\x) at (\x-2.5,1.5) {1};
}
\node (0) at (0,0.3) {0};
\node (1') at (-1,2.5) {1};
\node (4') at (1,2.5) {1};
\draw[-] (0)--(1)--(1')--(2)--(0)--(3)--(4')--(4)--(0);
\end{tikzpicture}
\caption{matroid scheme}
\label{fig:ex2}
\end{subfigure}
\hfill
\begin{subfigure}[t]{.2\textwidth}
\centering
\begin{tikzpicture}[scale=.9]
\node at (-1.3,2.7) {$x$};
\node at (1.3,2.7) {$y$};
\node at (0.3,0.3) {\phantom{$\ell$}};
\labelnodes
\foreach \x in {1,2,3,4} {
\node (\x) at (\x-2.5,1.5) {1};
}
\node (0) at (0,0.3) {0};
\node (1') at (-1,2.5) {1};
\node (4') at (1,2.5) {2};
\draw[-] (0)--(1)--(1')--(2)--(0)--(3)--(4')--(4)--(0);
\end{tikzpicture}
\caption{\eqref{x5} fails}
\label{fig:nox5}
\end{subfigure}
\hfill
\begin{subfigure}[t]{.2\textwidth}
\centering
\begin{tikzpicture}[scale=.9]
\node at (-1.3,2.7) {$x$};
\node at (1.3,2.7) {$y$};
\node at (0.4,0.3) {$\ell$};
\labelnodes
\foreach \x in {1,2,3,4} {
\node (\x) at (\x-2.5,1.5) {0};
}
\node (0) at (0,0.3) {0};
\node (1') at (-1,2.5) {0};
\node (4') at (1,2.5) {0};
\draw[-] (0)--(1)--(1')--(2)--(0)--(3)--(4')--(4)--(0);
\end{tikzpicture}
\caption{\eqref{x4} fails}
\label{fig:nox4}
\end{subfigure}

\caption{
Depicted are Hasse diagrams for simplicial posets, and each element $x$
labelled by $\rk(x)$. The first two are matroid schemes; the third is a
matroid prescheme; the fourth is neither. See \Cref{ex:cw}.}
\label{fig:ex}
\end{figure}

\end{example}

We will need the following useful properties regarding the rank function of a
matroid scheme, the latter giving a ``purity'' condition of the rank function.

\begin{proposition}\label{prop:rk}
Let $\M=(S,\rk)$ be a  matroid prescheme.
\begin{enumerate}
\item\label{item:joinatom} If $x\in S$, $a\in\at(S)$, and $u\in x\vee a$, then
$\rk(x)\leq\rk(u)\leq\rk(x)+1$.
\end{enumerate}
Now assume that $\M=(\S,\rk)$ is a matroid scheme.
\begin{enumerate}
\setcounter{enumi}{1}
\item\label{item:equirkjoin} 
If $T\subseteq S$ and $u,v\in\bigvee T$, then $\rk(u)=\rk(v)$.
\item\label{item:equirkmax} The function $\rk$ is constant on $\max(S)$.
\end{enumerate}
\end{proposition}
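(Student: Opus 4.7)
The plan is to handle the three parts separately; parts (1) and (3) follow quickly from the axioms, while (2) requires a key observation about minimal upper bounds.

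For (1), I would use \eqref{x2} for the lower bound: since $x \leq u$, $\rk(x) \leq \rk(u)$. For the upper bound, I invoke \eqref{x3} with $y = a$, giving $\rk(x) + \rk(a) \geq \rk(u) + \rk(x \wedge a)$; combining $\rk(a) \leq 1$ from \eqref{x1} with $\rk(x \wedge a) \geq 0$ yields $\rk(u) \leq \rk(x) + 1$.

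For (3), the strategy is to derive a contradiction with maximality. Suppose $m_1, m_2 \in \max(S)$ with $\rk(m_1) < \rk(m_2)$. Then \eqref{x5} produces an atom $a \leq m_2$ with $a \not\leq m_1$ and $m_1 \vee a \neq \emptyset$. Any $m' \in m_1 \vee a$ satisfies $m' > m_1$ (since $a \leq m'$ but $a \not\leq m_1$), contradicting the maximality of $m_1$. By symmetry, $\rk$ is constant on $\max(S)$.

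The main obstacle is (2): distinct elements $u, v \in \bigvee T$ are necessarily incomparable and need not share any common upper bound, so they cannot be compared inside a single Boolean interval $S_{\leq w}$. My plan hinges on the following key observation: because $u$ is a minimal upper bound of $T$ in $S$, $u$ must equal the join of $T$ in the Boolean lattice $S_{\leq u}$, since any smaller upper bound of $T$ in $S_{\leq u}$ would give a strictly smaller upper bound of $T$ in $S$ and contradict the minimality of $u$. Identifying $S_{\leq u}$ with $\boo(\at(S_{\leq u}))$, this says that every atom of $S$ below $u$ lies below some $t \in T$. To conclude, suppose $\rk(u) > \rk(v)$: applying \eqref{x5} with $x = v$ and $y = u$ yields an atom $a \leq u$ with $a \not\leq v$; by the key observation, $a \leq t$ for some $t \in T$, and since $v \geq t$ we get $a \leq v$, a contradiction. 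Symmetry then gives $\rk(u) = \rk(v)$.
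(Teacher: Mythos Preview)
Your proof is correct and follows essentially the same approach as the paper: part (1) via \eqref{x1}--\eqref{x3}, part (3) by contradicting maximality through \eqref{x5}, and part (2) by observing that the atoms below any $u\in\bigvee T$ are exactly the atoms below some $t\in T$ (your ``key observation'' is precisely the paper's claim that $\at(S_{\leq u})=\at(S_{\leq v})$), then applying \eqref{x5}. The only cosmetic difference is that the paper splits (1) into the cases $a\leq x$ and $a\not\leq x$, whereas you handle both at once by bounding $\rk(x\wedge a)\geq 0$.
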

\begin{proof}
For \eqref{item:joinatom}, if $a\leq x$, then $u=x$ and hence $\rk(u)=\rk(x)$.
Otherwise, we have $a\wedge x=\zero$ and hence
\[\rk(x)\leq \rk(u)=\rk(u)+\rk(\zero) \leq \rk(x)+\rk(a)\leq \rk(x)+1,\]
by \eqref{x2}, \eqref{x1}, \eqref{x3}, and \eqref{x1}, respectively.

Now for \eqref{item:equirkjoin}, suppose $u,v\in \bigvee T$. Since $S$ is
simplicial, we have $\at(S_{\leq u})=\at(S_{\leq v})$ because both sets are
equal to $\{a\in\at(S)\st a\leq t, \text{ some }t\in T\}$.
If $\rk(u)<\rk(v)$, then \eqref{x5} would imply existence of an atom $a\in
\at(S_{\leq v})\setminus\at(S_{\leq u})$, which cannot exist. Thus,
$\rk(u)=\rk(v)$.

Finally, for \eqref{item:equirkmax},
if $u,v\in\max(S)$ such that $\rk(u)<\rk(v)$, then again by \eqref{x5}, there is
some $a\in A(\S)$ with $a\leq v$, $a\not\leq u$, $a\vee u\neq\emptyset$. This
contradicts maximality of $u$, thus $\rk(u)=\rk(v)$.
\end{proof}

\begin{definition}
Say that a matroid prescheme $\M=(\S,\rk)$ is \textbf{pure} if $\rk$ is
constant on $\max(\S)$. When $\M$ is pure, define its \textbf{rank}
$\rk(\M)$ to be the rank of any maximal element of $\S$.
\end{definition}

\begin{example}
By \Cref{prop:rk}\eqref{item:equirkmax}, any matroid scheme is pure.

The matroid prescheme depicted in \Cref{fig:nox5} is not pure.
\end{example}

As a last result in this section, we will establish stronger versions
\eqref{x4} and \eqref{x5}, analogous to \cite[(CR1'),(CR2')]{ardila} for
semimatroids.
\begin{proposition}
A matroid prescheme $\M=(\S,\rk)$ satisfies the condition:
\begin{enumerate}
\item[\mylabel{x4'}{\textbf{M4'}}]
if $x,y\in \S$ and $\ell\in x\wedge y$ such that
$\rk(x)=\rk(\ell)$, then $x\vee y\neq\emptyset$ and $\rk(u)=\rk(y)$ for
any $u\in x\vee y$.
\end{enumerate}
A matroid scheme $\M=(\S,\rk)$ satisfies the condition:
\begin{enumerate}
\item[\mylabel{x5'}{\textbf{M5'}}]
if $x,y\in \S$ and $\rk(x)<\rk(y)$, then there is some
$a\in\at(\S)$ such that  $a\leq y$, $a\not\leq x$, $x\vee
a\neq\emptyset$, and $\rk(x\vee a)=\rk(x)+1$.
\end{enumerate}
\end{proposition}
\begin{proof}
For the conclusion on rank in \eqref{x4'}, we have $\rk(u)\geq\rk(y)$ by
\eqref{x2} and, since $\rk(\ell)=\rk(x)$, we have $\rk(y)\leq \rk(u)$ by
\eqref{x3}.

For \eqref{x5'}, we repeatedly apply \eqref{x5} until we have a
chain $x=w_0<w_1<w_2<\cdots<w_k$ with $w_i\setminus
w_{i-1}=a_i\in\at(\S_{\leq y})$ and $\rk(w_k)=\rk(y)$.
There exists $z_i\in a_i\vee x$ with $z_i\leq w_i$ for each $i$, and we
claim that $\rk(z_i)=\rk(x)+1$ for some $i$.
We argue by contradiction, assuming that $\rk(z_i)=\rk(x)$ for all $i$.
Then since $w_i\in w_{i-1}\vee z_i$ and $x=w_{i-1}\wedge z_i$, we have
by \eqref{x4'} that $\rk(w_i)=\rk(w_{i-1})$ for all $i$.
But this would imply $\rk(x)=\rk(y)$, our contradiction.
\end{proof}

\section{Closure and flats}
The lattice of flats of a matroid plays an important role in arrangement theory,
and it gives rise to 
an equivalent definition of a simple matroid. In order to see that analogous
story in our setting, we define here the closure operator and poset of
flats for a matroid prescheme.

\begin{lemma}
Let $(\S,\rk)$ be a matroid prescheme, and let $r\in\Z_{\geq0}$.
If $x$ and $y$ are distinct maximal elements of $\rk^{-1}(r)$, and
$\ell\in x\wedge y$, then $\rk(\ell)<r$.
\end{lemma}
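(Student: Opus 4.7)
The plan is to argue by contradiction: assume $\rk(\ell) \geq r$, so that (by \eqref{x2} applied to $\ell \leq x$) we have $\rk(\ell) = r$, and then show $x = y$.

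First, since $\rk(x) = r = \rk(\ell)$ and $\ell \in x \wedge y$, axiom \eqref{x4} gives $x \vee y \neq \emptyset$. Choose any $u \in x \vee y$. As noted in the preliminaries on simplicial posets, $x \vee y \neq \emptyset$ forces $|x \wedge y| = 1$, so $x \wedge y = \{\ell\}$. Applying \eqref{x3} then yields
\[
\rk(x) + \rk(y) \;\geq\; \rk(u) + \rk(\ell),
\]
that is, $2r \geq \rk(u) + r$, hence $\rk(u) \leq r$. On the other hand, $x \leq u$ combined with \eqref{x2} gives $\rk(u) \geq \rk(x) = r$, so $\rk(u) = r$.

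Now $u \in \rk^{-1}(r)$ with $x \leq u$ and $y \leq u$. By maximality of $x$ and $y$ in $\rk^{-1}(r)$, we conclude $u = x$ and $u = y$, contradicting the assumption that $x$ and $y$ are distinct.

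The argument is essentially mechanical once \eqref{x4} is invoked; the only subtle point is justifying that the meet in \eqref{x3} is the single element $\ell$, which is precisely the observation from the preliminaries that $x \vee y \neq \emptyset$ implies $|x \wedge y| = 1$ in a simplicial poset. I do not anticipate a genuine obstacle here — this lemma is really designed to let \eqref{x4} do the work of ensuring that the closure operator defined later is well-behaved at a given rank.
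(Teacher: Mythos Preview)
Your proof is correct and follows essentially the same route as the paper's: argue by contradiction, use \eqref{x4} to produce $u\in x\vee y$, then combine \eqref{x2} and \eqref{x3} to force $\rk(u)=r$, which violates maximality of $x$ and $y$ since $x\leq u$ and $y\leq u$. Your version is slightly more explicit in justifying why $x\wedge y=\{\ell\}$ before invoking \eqref{x3}, but otherwise the arguments are identical.
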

\begin{proof}
Suppose not, that is, $\rk(\ell)=\rk(x)$. Then \eqref{x4} implies $x\vee
y\neq\emptyset$, so
let $u\in x\vee y$. Then by \eqref{x2}, $r=\rk(x)\leq \rk(u)$, and by \eqref{x3},
$\rk(u)\leq \rk(x)+\rk(y)-\rk(\ell) = r$. Thus, $u\in \rk^{-1}(r)$, contradicting
maximality of $x$ and $y$.
\end{proof}

This lemma implies that the following is a well-defined function $\cl:S\to S$.
We will see in \Cref{prop:closure} that $\cl$ is a closure operator, meaning
that it satisifes \eqref{cl1}--\eqref{cl3} below. It also satisfies a
``geometric'' property \eqref{cl4} which directly translates that of matroids
when $S$ is a lattice.

\begin{definition}\label{def:closure}
Let $\M=(S,\rk)$ be a matroid prescheme.
For $x\in\S$, define the \textbf{closure} of $x$, denoted by $\cl(x)$, to be the
unique maximal element of  the set
$\{y\in S_{\geq x} \st \rk(y)=\rk(x)\}$.
\end{definition}

\begin{lemma}\label{lem:closure}
Let $(S,\rk)$ be a matroid prescheme, and let $x,y\in S$. Then $y\leq\cl(x)$ if and
only if there exists $u\in x\vee y$ such that $\rk(u)=\rk(x)$.
\end{lemma}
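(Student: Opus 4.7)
The plan is to handle the two directions separately, leaning entirely on the definitions of $\cl$ and of $x \vee y$, together with the monotonicity axiom \eqref{x2} and the preceding lemma that underlies \Cref{def:closure}.

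For the ``only if'' direction, I would assume $y \leq \cl(x)$ and exploit the fact that $\cl(x)$, by definition, lies in $S_{\geq x}$. Together with $y \leq \cl(x)$, this makes $\cl(x)$ a common upper bound of $x$ and $y$, so $x \vee y \neq \emptyset$. Picking any $u \in x \vee y$, minimality of $u$ among upper bounds forces $u \leq \cl(x)$. Applying \eqref{x2} twice and using $\rk(\cl(x)) = \rk(x)$ gives the squeeze $\rk(x) \leq \rk(u) \leq \rk(\cl(x)) = \rk(x)$, producing the desired equality $\rk(u) = \rk(x)$.

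For the ``if'' direction, I would assume there exists $u \in x \vee y$ with $\rk(u) = \rk(x)$. Since $u \geq x$, this $u$ is a member of the set $\{z \in S_{\geq x} \st \rk(z) = \rk(x)\}$, whose unique maximal element is $\cl(x)$ by \Cref{def:closure}. Because $S$ is finite, any element of this set lies below some maximal element of this set, and uniqueness of the maximal element then upgrades $\cl(x)$ from merely maximal to an actual maximum. Hence $u \leq \cl(x)$, and since $y \leq u$, the conclusion $y \leq \cl(x)$ follows.

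The main (mild) obstacle is this last step: the definition only asserts that $\cl(x)$ is the unique maximal element of the specified set, and one has to notice that in a finite poset this is strong enough to guarantee it is a top element. Everything else is a direct unwinding of definitions and one application of \eqref{x2}.
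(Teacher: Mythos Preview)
Your approach matches the paper's almost exactly, and the ``if'' direction is fine (including your careful remark that a unique maximal element of a finite poset is in fact its maximum, a point the paper leaves implicit when it writes ``by maximality of $\cl(x)$'').

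There is, however, a slip in the ``only if'' direction. You write: ``Picking any $u \in x \vee y$, minimality of $u$ among upper bounds forces $u \leq \cl(x)$.'' That is not what minimality gives you. In a simplicial poset the set $x \vee y$ consists of the \emph{minimal} common upper bounds, not a least one; there may be several incomparable elements of $x\vee y$, and a minimal upper bound need not lie below an arbitrary given upper bound such as $\cl(x)$. What the existence of the upper bound $\cl(x)$ actually buys you is that \emph{some} $u \in x \vee y$ satisfies $u \leq \cl(x)$, and this is precisely how the paper states it. Replace ``any'' by ``some'' and your squeeze $\rk(x)\leq\rk(u)\leq\rk(\cl(x))=\rk(x)$ goes through verbatim; since the lemma only asks for the existence of such a $u$, nothing more is needed.
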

\begin{proof}
If $y\leq \cl(x)$, then $\cl(x)$ is a common upper bound of $x$ and $y$. This
implies that there is some $u\in x\vee y$ such that $x\leq u\leq \cl(x)$. But
then \eqref{x2} implies $\rk(x)\leq \rk(u)\leq\rk(\cl(x))=\rk(x)$, so
$\rk(u)=\rk(x)$.

Conversely, if $u\in x\vee y$ and $\rk(u)=\rk(x)$, then $y\leq u \leq\cl(x)$ by
maximality of $\cl(x)$.
\end{proof}

\begin{proposition}\label{prop:closure}
Let $(S,\rk)$ be a matroid prescheme. The function
$\cl:S\to S$ satisfies the following properties:
\begin{enumerate}
\item[\mylabel{cl1}{\textbf{CL1}}] if $x\in S$, then $x\leq \cl(x)$.
\item[\mylabel{cl2}{\textbf{CL2}}] if $x,y\in S$ and $x\leq y$, then
$\cl(x)\leq \cl(y)$.
\item[\mylabel{cl3}{\textbf{CL3}}] if $x\in S$, then $\cl(\cl(x)) = \cl(x)$.
\item[\mylabel{cl4}{\textbf{CL4}}] if $x\in S$, $a,b\in\at(S)$, $u\in x\vee
b$, $a\leq \cl(u)$, and $a\not\leq\cl(x)$, then there exists $v\in x\vee a$ such
that $v\leq\cl(u)$ and $b\leq \cl(v)$. 
\end{enumerate}
\end{proposition}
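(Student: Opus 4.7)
The plan is to handle \eqref{cl1}, \eqref{cl2}, \eqref{cl3} using essentially rank bookkeeping, and reserve the geometric axioms \eqref{x4}, \eqref{x5} for the exchange property \eqref{cl4}. Property \eqref{cl1} is immediate: $x$ itself belongs to the set $\{y\in\S_{\geq x}\st\rk(y)=\rk(x)\}$, so its maximum $\cl(x)$ dominates $x$. Property \eqref{cl3} follows in one line: $\cl(\cl(x))$ belongs to the same set (its rank equals $\rk(\cl(x))=\rk(x)$ and it dominates $x$), so by maximality $\cl(\cl(x))=\cl(x)$.

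For \eqref{cl2}, given $x\leq y$, the goal is to produce an element of $\cl(x)\vee y$ of rank $\rk(y)$. Since $x$ is a common lower bound of $\cl(x)$ and $y$, there is some $\ell\in\cl(x)\wedge y$ with $\ell\geq x$, and rank monotonicity \eqref{x2} sandwiches $\rk(\ell)=\rk(\cl(x))$. Axiom \eqref{x4} then supplies $\cl(x)\vee y\neq\emptyset$, and \eqref{x3} together with \eqref{x2} forces any $u\in\cl(x)\vee y$ to satisfy $\rk(u)=\rk(y)$. With $u\geq y$ and equal rank, the definition of closure gives $u\leq\cl(y)$, whence $\cl(x)\leq u\leq\cl(y)$.

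The main work is \eqref{cl4}. I first argue $\rk(u)=\rk(x)+1$ using \Cref{prop:rk}\eqref{item:joinatom}: if instead $\rk(u)=\rk(x)$ then $u\leq\cl(x)$, and the already-proved \eqref{cl2} and \eqref{cl3} give $\cl(u)\leq\cl(x)$, contradicting $a\not\leq\cl(x)$. Next, since $\cl(u)$ is a common upper bound of $x$ and $a$, finiteness of $\S$ supplies some $v\in x\vee a$ with $v\leq\cl(u)$; picking $v$ beneath $\cl(u)$, rather than an arbitrary minimal upper bound of $x$ and $a$, is the one genuine subtlety. By \Cref{prop:rk}\eqref{item:joinatom} again $\rk(v)\in\{\rk(x),\rk(x)+1\}$, and the lower case would force $a\leq v\leq\cl(x)$, contradicting the hypothesis, so $\rk(v)=\rk(x)+1=\rk(u)=\rk(\cl(u))$. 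Finally, $v\leq\cl(u)$ with $\rk(v)=\rk(\cl(u))$, together with \eqref{cl2} and \eqref{cl3}, gives $\cl(v)=\cl(u)$, so $b\leq u\leq\cl(u)=\cl(v)$, establishing \eqref{cl4}.
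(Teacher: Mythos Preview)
Your proof is correct and follows essentially the same route as the paper's. Two small differences are worth noting: you give an explicit argument for \eqref{cl2} via \eqref{x4} (which the paper simply declares ``immediate''), and in \eqref{cl4} you bypass the auxiliary element $w\in x\vee a\vee b$ and the appeal to \Cref{lem:closure} by directly showing $\cl(v)=\cl(u)$ from $v\leq\cl(u)$ and $\rk(v)=\rk(\cl(u))$, which is a mild streamlining of the same idea.
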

\begin{proof}
Properties \eqref{cl1}--\eqref{cl3} are immediate.
To see \eqref{cl4}, let $a$, $x$, and $b$ be as given. Then $\cl(u)$ is a common
upper bound of these three, so there exists $v\in x\vee a$ and $w\in x\vee
a\vee b$ such that 
$v\leq w \leq \cl(u)$. Necessarily, $u\leq w\leq \cl(u)$ and hence
$\cl(w)=\cl(u)$. By \Cref{prop:rk}\eqref{item:joinatom}, we have $\rk(x)\leq
\rk(v)\leq \rk(x)+1$. 
Since $v\geq x$, $\rk(x)=\rk(v)$ would imply $\cl(x)=\cl(v)\geq a$.
But $a\not\leq \cl(x)$ by assumption, so we must have $\rk(v)=\rk(x)+1$.
Similarly, $\rk(u)=\rk(x)+1$, and hence $\rk(v)=\rk(u)=\rk(w)$ implying $b\leq
\cl(v)$ by \Cref{lem:closure}.
\end{proof}

\begin{definition}\label{def:flats}
Let $\M=(\S,\rk)$ be a matroid prescheme.
We say that $x\in\S$ is a \textbf{flat} if $\cl(x)=x$.
The \textbf{poset of flats} $\F(\M)$ is the subposet of $\S$ whose elements are
the flats of $\M$.
\end{definition}

\begin{example}\label{ex:flats}
The flats of the matroid preschemes in \Cref{ex:cw} are depicted in
\Cref{fig:flats}. 
\begin{figure}[ht]
\begin{subfigure}[t]{.3\textwidth}
\centering
\begin{tikzpicture}
\solidnodes
\node (0) at (0,0.5) {};
\foreach \x in {1,2,3} {
\node (\x) at (\x-2,1.5) {};
}
\node (u) at (-.5,2.5) {};
\node (v) at (.5,2.5) {};
\draw[-] (0)--(1)--(u)--(2)--(0)--(3)--(v)--(2);
\draw[-] (u)--(3);
\end{tikzpicture}
\caption{ }
\label{fig:flats1}
\end{subfigure}
\hspace{5mm}
\begin{subfigure}[t]{.3\textwidth}
\centering
\begin{tikzpicture}
\solidnodes
\node (0) at (0,0.5) {};
\node (1) at (-0.5,1.5) {};
\node (2) at (0.5,1.5) {};
\draw[-] (1)--(0)--(2);
\end{tikzpicture}
\caption{ }
\label{fig:flats2}
\end{subfigure}
\hspace{5mm}
\begin{subfigure}[t]{.3\textwidth}
\centering
\begin{tikzpicture}
\node at (-1.2,1.2) {$x$};
\node at (.8,2.5) {$y$};
\solidnodes
\node (0) at (0,0.5) {};
\foreach \x in {1,2,3} {
\node (\x) at (\x-2,1.5) {};
}
\node (v) at (.5,2.5) {};
\draw[-] (0)--(1);
\draw[-] (2)--(0)--(3)--(v)--(2);
\end{tikzpicture}
\caption{ }
\label{fig:flats3}
\end{subfigure}
\caption{The posets of flats for the matroid schemes depicted in
\Cref{fig:ex1,fig:ex2,fig:nox5}.}
\label{fig:flats}
\end{figure}
\end{example}

\begin{proposition}\label{prop:localflats}
If $\M=(S,\rk)$ is a matroid prescheme and $x\in F(\M)$ is a flat of $\M$, then
the poset of flats of the local matroid $\M_x$ is $F(\M_x)=F(\M)_{\leq x}$.
\end{proposition}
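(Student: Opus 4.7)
The plan is to show that for every $y \in S$ with $y \leq x$, the closure of $y$ in the local matroid $\M_x$ coincides with $\cl_\M(y)$. Since an element is a flat exactly when it is fixed by the closure operator, this immediately gives $F(\M_x) = F(\M)_{\leq x}$. The inclusion $\cl_{\M_x}(y) \leq \cl_\M(y)$ is immediate: the set $\{z \in S_{\leq x} : z \geq y,\ \rk(z) = \rk(y)\}$ defining $\cl_{\M_x}(y)$ is contained in the analogous set on all of $S$ defining $\cl_\M(y)$, so the maximum of the smaller set is bounded above by the maximum of the larger.

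The main step is the reverse inequality, which I would reduce to the following claim: any $y' \in S_{\geq y}$ with $\rk(y') = \rk(y)$ must satisfy $y' \leq x$. To prove this, note that $y$ is a common lower bound of $y'$ and $x$, and by extending $y$ upward within the finite set of common lower bounds I may choose $\ell \in y' \wedge x$ with $\ell \geq y$. Then $\rk(y') \geq \rk(\ell) \geq \rk(y) = \rk(y')$ forces $\rk(\ell) = \rk(y')$. Axiom \eqref{x4} (applied to $y'$ and $x$) then yields some $u \in y' \vee x$; axiom \eqref{x3} gives $\rk(u) \leq \rk(y') + \rk(x) - \rk(\ell) = \rk(x)$, and \eqref{x2} supplies the matching lower bound, so $\rk(u) = \rk(x)$. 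Lemma \ref{lem:closure}, combined with the flatness hypothesis $\cl_\M(x) = x$, then forces $y' \leq x$.

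Applying this with $y' = \cl_\M(y)$ shows $\cl_\M(y) \leq x$. Since $\cl_\M(y)$ also lies in $S_{\geq y}$ and has rank $\rk(y)$, it belongs to the defining set for $\cl_{\M_x}(y)$, yielding $\cl_\M(y) \leq \cl_{\M_x}(y)$. Together with the easy direction, the two closures agree on $S_{\leq x}$, and the proposition follows. The main obstacle is precisely the reduction in the middle paragraph: this is where axioms \eqref{x3}, \eqref{x4}, and the flatness of $x$ must be used in concert, and it is a concrete illustration of why \eqref{x4} is essential for a well-behaved theory of flats (cf. Remark \ref{rmk:m4}).
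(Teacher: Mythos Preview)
Your proof is correct and follows the same underlying idea as the paper: both arguments show that $\cl_\M$ restricts to a map $S_{\leq x}\to S_{\leq x}$, from which the equality of closures and hence of flats follows. The paper's proof is a single line invoking \eqref{cl2} (already established in \Cref{prop:closure}) to get $\cl_\M(y)\leq\cl_\M(x)=x$, whereas you unpack that step by hand via \eqref{x2}--\eqref{x4} and \Cref{lem:closure}; this is not a different route, just a more explicit one.
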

\begin{proof}
Since $x$ is a flat, the image of the restriction $\cl|_{S_{\leq x}}:S_{\leq
x}\to S$ is contained in $S_{\leq x}$.
\end{proof}

\begin{lemma}\label{lem:closure2}
Suppose that $\M=(S,\rk)$ is a matroid prescheme, $x,y\in S$, and 
$u\in F(\M)$ such that $u\in \cl(x)\vee_F\cl(y)$. Then there
exists some $v\in x\vee_S y$ such that $\cl(v)=u$.
\end{lemma}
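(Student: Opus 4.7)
The plan is to exhibit $v$ as a minimal common upper bound of $x$ and $y$ in $S$ that lies below $u$, and then to argue that its closure must equal $u$ by invoking minimality of $u$ in $\F(\M)$.

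First, by \eqref{cl1} we have $u\geq \cl(x)\geq x$ and $u\geq \cl(y)\geq y$, so $u$ is itself a common upper bound of $x$ and $y$ in $S$. Consequently the set $T:=\{w\in S\st w\geq x,\ w\geq y,\ w\leq u\}$ is nonempty, and since $S$ is finite I would choose any minimal element $v$ of $T$. This $v$ is in fact a minimal common upper bound of $x,y$ in all of $S$: any common upper bound $v'\leq v$ of $x$ and $y$ automatically satisfies $v'\leq u$, hence lies in $T$, and minimality in $T$ forces $v'=v$. So $v\in x\vee_S y$.

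Next, I would verify $\cl(v)=u$. On one hand, $v\leq u$ together with \eqref{cl2} and the fact that $\cl(u)=u$ (since $u$ is a flat) gives $\cl(v)\leq u$. On the other hand, $x\leq v$ yields $\cl(x)\leq \cl(v)$ by \eqref{cl2}, and likewise $\cl(y)\leq \cl(v)$. Since $\cl(v)$ is a flat by \eqref{cl3}, it is therefore a common upper bound of $\cl(x)$ and $\cl(y)$ in $\F(\M)$ that lies below $u$. Minimality of $u\in\cl(x)\vee_F\cl(y)$ then forces $\cl(v)=u$.

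The only genuinely non-trivial point is recognizing that a minimal element of the restricted set $T$ is already minimal over the unrestricted set of common upper bounds of $x$ and $y$ in $S$; once that is in place, the remainder is a short closure calculation using only \eqref{cl1}--\eqref{cl3}. I do not anticipate needing the deeper matroid-scheme axioms \eqref{x4} or \eqref{x5}, since their content is already packaged into the well-definedness and basic properties of $\cl$ established in \Cref{prop:closure}.
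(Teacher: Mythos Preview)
Your proof is correct and follows essentially the same route as the paper's: choose a minimal upper bound $v$ of $x$ and $y$ in $S$ lying below $u$, then use \eqref{cl2}, \eqref{cl3}, and the minimality of $u$ in $\cl(x)\vee_F\cl(y)$ to conclude $\cl(v)=u$. You are simply more explicit than the paper about why such a $v$ below $u$ is automatically in $x\vee_S y$, which the paper states without elaboration.
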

\begin{proof}
Since $u$ is a common upper bound of $x$ and $y$, there exists $v\in x\vee_S y$
such that $v\leq u$ and hence $\cl(v)\leq \cl(u)=u$.
Now, $x\leq v$ implies $\cl(x)\leq \cl(v)$, and similarly $\cl(y)\leq \cl(v)$.
By minimality of $u\in \cl(x)\vee_F\cl(y)$ we must have $\cl(v)=u$.
\end{proof}

\section{Independence}
Matroids are well known for their many cryptomorphic definitions. In this
section, we establish a cryptomorphic definition of matroid (pre)scheme in terms of
independence, which then leads to bases and circuits in the next
sections.
See an example of independence in \Cref{fig:ind}.

\begin{figure}[ht]
\begin{subfigure}[t]{.3\textwidth}
\centering
\begin{tikzpicture}[scale=.8]
\solidnodes
\node (bot) at (0,0.3) {};
\foreach \x in {-1,0,1} {
\node (\x) at (\x*1.5,1.5) {};
\node (\x') at (0.75*\x-0.75,3) {};
}
\node (a) at (1.5,3) {};
\opennodes
\node (top) at (-.75,4.2) {};
\draw[-] (bot)--(-1)--(-1');
\draw[-] (bot)--(0)--(a)--(1)--(bot);
\draw[-] (-1')--(0)--(1')--(1)--(0')--(-1);
\draw[dashed,-] (1')--(top)--(-1');
\draw[dashed,-] (0')--(top);
\end{tikzpicture}
\caption{$I(\M)$}
\label{fig:ind}
\end{subfigure}
\begin{subfigure}[t]{.3\textwidth}
\centering
\begin{tikzpicture}[scale=.8]
\opennodes
\node (bot) at (0,0.3) {};
\foreach \x in {-1,0,1} {
\node (\x) at (\x*1.5,1.5) {};
}
\node (top) at (-.75,4.2) {};
\solidnodes
\foreach \x in {-1,0,1} {
\node (\x') at (0.75*\x-0.75,3) {};
}
\node (a) at (1.5,3) {};
\draw[dashed,-] (bot)--(-1)--(-1')--(top)--(0')--(-1);
\draw[dashed,-] (bot)--(0)--(a)--(1)--(bot);
\draw[dashed,-] (-1')--(0)--(1')--(1)--(0');
\draw[dashed,-] (1')--(top);
\end{tikzpicture}
\caption{$B(\M)$}
\label{fig:basis}
\end{subfigure}
\begin{subfigure}[t]{.3\textwidth}
\centering
\begin{tikzpicture}[scale=.8]
\opennodes
\node (bot) at (0,0.3) {};
\foreach \x in {-1,0,1} {
\node (\x) at (\x*1.5,1.5) {};
\node (\x') at (0.75*\x-0.75,3) {};
}
\node (a) at (1.5,3) {};
\solidnodes
\node (top) at (-.75,4.2) {};
\draw[dashed,-] (bot)--(-1)--(-1')--(top)--(0')--(-1);
\draw[dashed,-] (bot)--(0)--(a)--(1)--(bot);
\draw[dashed,-] (-1')--(0)--(1')--(1)--(0');
\draw[dashed,-] (1')--(top);
\end{tikzpicture}
\caption{$C(\M)$}
\label{fig:circ}
\end{subfigure}
\caption{The independence poset, bases, and circuits for the matroid
scheme $\M$ from \Cref{fig:ex1}.}
\end{figure}

\begin{definition}
Let $\M=(S,\rk)$ be a matroid prescheme. Say $x\in S$ is \textbf{independent}
if $\rk(x)=|x|$, and otherwise $x$ is \textbf{dependent}. The
\textbf{independence poset} $I(\M)$ is the subposet of $S$ whose elements are
the independent elements of $S$.
\end{definition}

Using this definition, we obtain the following cryptomorphic definition of a
matroid (pre)scheme in terms of independence. 
When $S$ is a lattice, \eqref{i1}--\eqref{i3} directly translate to the
usual axioms for the independence complex of a matroid and \eqref{i4}
always holds. In our setting, \eqref{i1}-\eqref{i3} provides a local
matroid structure and \eqref{i4} gives reasonable compatibility
(analogous to \eqref{x4}).
We also have both a local \eqref{i3} and global \eqref{i3'} version of the
independence augmentation axiom, separating matroid preschemes and matroid
schemes.

\begin{theorem}
Let $S$ be a simplicial poset and let $I\subseteq S$.  Then $I$ is the
independence poset of a matroid prescheme on $S$ if and only if $I$ satisfies
the following properties:
\begin{enumerate}
\item[\mylabel{i1}{\textbf{I1}}] $I$ is nonempty.
\item[\mylabel{i2}{\textbf{I2}}] if $x,y\in S$ with $x\leq y$ and $y\in I$,
then $x\in I$.
\item[\mylabel{i3}{\textbf{I3}}] if $x,y\in I$, $|x|<|y|$, and $u\in
x\vee y$, then there is
some $a\in\at(S)$ such that $a\leq y$, $a\not\leq x$, and 
$(a\vee x)_{\leq u}\subseteq I$.
\item[\mylabel{i4}{\textbf{I4}}] if $x,y\in S$, $z\in\max I_{\leq x}$
and $z\leq y$, then $x\vee y\neq\emptyset$.
\end{enumerate}
Moreover, $I$ is the independence poset of a matroid scheme on $S$ if
and only if it satisfies the above properties with \eqref{i3} replaced
by the stronger condition \eqref{i3'}.
\begin{enumerate}
\item[\mylabel{i3'}{\textbf{I3'}}] if $x,y\in I$ and $|x|<|y|$, then there is
some $a\in\at(S)$ such that $a\leq y$, $a\not\leq x$, and $a\vee x$ is a
nonempty subset of $I$.
\end{enumerate}
\end{theorem}
\begin{proof}
First assume that $\M=(\S,\rk)$ is a matroid prescheme, and let
$I=I(\M)$ be its independence poset. Notice that for every $u\in\S$,
$I_{\leq u}$ is the independence poset of the local matroid $\M_u$ and
necessarily satisfies \eqref{i1}--\eqref{i3}. Since these
conditions are local in nature, $I$ must satisfy \eqref{i1}--\eqref{i3}.
To see \eqref{i4}, let $x,y,z$ be as stated. Then there is some $\ell\in x\wedge
y$ such that $z\leq \ell$. Since $z$ is maximally independent under $x$, we
have $\rk(x)=\rk(z)\leq \rk(\ell)\leq\rk(x)$, thus $x\vee y\neq\emptyset$ by
\eqref{x4}.

To establish \eqref{i3'} when $\M$ additionally satisfies \eqref{x5},
let $x,y\in I(\M)$ with $|x|<|y|$. By \eqref{x5'}, there is
some $a\in \at(S)$ such that $a\leq y$, $a\not\leq x$, and $a\vee
x\neq\emptyset$ with $\rk(a\vee x)=\rk(x)+1$, thus $a\vee x$ is a
nonempty subset of $I(\M)$.

Conversely, suppose that $I$ satisfies \eqref{i1}--\eqref{i4}, and define
$\rk:S\to\Z_{\geq 0}$ by $\rk(x)=\max\{|z|\st z\in I_{\leq x}\}$. We claim that
$(S,\rk)$ is a matroid prescheme; since $I=\{x\in S\st \rk(x)=|x|\}$, $I$ will be
its independence poset.
First note that for any $u\in S$, properties
\eqref{i1}--\eqref{i3} imply that $I_{\leq u}$ is the independence poset of a
matroid $(S_{\leq u},\rk|_{S_{\leq u}})$. This implies the local properties
\eqref{x1}--\eqref{x3}. 
To see \eqref{x4}, suppose that $\ell\in x\wedge y$ such that
$\rk(\ell)=\rk(x)$, and let $z\in\max I_{\leq \ell}$. Then $z\leq y$, and since
$\rk(z)=\rk(\ell)=|z|$, we have $z\in\max I_{\leq x}$. Thus $x\vee y\neq\emptyset$
by \eqref{i4}.

Finally, to establish \eqref{x5} when $I$ satisfies \eqref{i3'},
suppose that $x,y\in S$ with $\rk(x)<\rk(y)$. Let $z\in\max I_{\leq x}$
and $w\in\max I_{\leq y}$ so that $|z|=\rk(x)<\rk(y)=|w|$. Then
\eqref{i3'} implies there is some $a\in\at(S)$
such that $a\leq w$, $a\not\leq z$, and $a\vee z$ is a nonempty subset of $I$.
Note that $a\leq w$ implies that $a\leq y$, and maximality of $z$ in $I_{\leq
x}$ implies that $a\not\leq x$. For $u\in a\vee z$, \eqref{i4}
implies that $x\vee u\neq\emptyset$, and since $a\leq u$ this means $x\vee
a\neq\emptyset$.
\end{proof}

\section{Bases}
With terminology motivated by linear algebra, a maximally independent
element is called a basis. 
See \Cref{fig:basis} for an example.
We obtain here a cryptomorphic definition of
a matroid (pre)scheme in terms of its bases.

\begin{definition}
In a matroid prescheme $\M=(S,\rk)$, let \[B(\M)=\max I(\M)=
\max\{x\in S\st \rk(x)=|x|\},\] the
maximally independent elements of $S$. 
An element of $B(\M)$ is called a \textbf{basis}.
\end{definition}

\begin{corollary}\label{cor:basisrank}
If $\M$ is a pure matroid prescheme (such as a matroid scheme), then
$\rk(\M)=\rk(x)$ for any basis $x\in B(\M)$.
\end{corollary}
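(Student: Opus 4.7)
The plan is to reduce the claim to the classical fact that every basis of an ordinary matroid has size equal to the matroid's rank, by showing that a basis of $\M$ is a basis of some localization $\M_u$ at a maximal element $u \in \max(\S)$.

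First I would observe that every element of $\S$ lies below some maximal element of $\S$. Given $x \in B(\M)$, the subposet $\S_{\geq x}$ is finite and nonempty, so it contains a maximal element $u$; since any $w \in \S$ with $w > u$ automatically satisfies $w \geq x$, this $u$ is in fact maximal in $\S$.

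Next I would verify that $x$ is a basis of the local matroid $\M_u = (\S_{\leq u}, \rk|_{\S_{\leq u}})$. Independence is immediate from $\rk|_{\S_{\leq u}}(x) = \rk(x) = |x|$. For maximality, suppose $y \in \S_{\leq u}$ with $y \geq x$ is independent in $\M_u$; then $\rk(y) = |y|$, so $y \in I(\M)$, and maximality of $x$ in $I(\M)$ forces $y = x$. Hence $x$ is maximally independent in $\S_{\leq u}$, i.e., a basis of the matroid $\M_u$.

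Since $\M_u$ is an ordinary matroid, every basis of $\M_u$ has size equal to the rank of $\M_u$, which is $\rk(u)$. Therefore $\rk(x) = |x| = \rk(u)$. Finally, by \Cref{prop:rk}\eqref{item:equirkmax} and the definition of the rank of $\M$, we have $\rk(u) = \rk(\M)$, completing the argument. There is no real obstacle here; the proof is a clean reduction to local matroid theory, and the only point requiring attention is that maximality of $x$ in the global independence poset $I(\M)$ implies maximality in the local independence poset of $\M_u$, which follows because local independence coincides with global independence for elements below $u$.
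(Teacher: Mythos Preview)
Your proof is correct and follows essentially the same approach as the paper: choose a maximal element $u$ above the basis $x$, use the local matroid $\M_u$ to identify $|x|$ with $\rk(u)$, and conclude via the definition of $\rk(\M)$. The paper compresses your argument into a single line, writing $\rk(u)=\max\{|z|\st z\in I(\M)_{\leq u}\}=|x|=\rk(x)$, but the underlying reasoning---that $x$ is maximally independent below $u$ and hence a basis of $\M_u$---is identical.
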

\begin{proof}
Let $x\in B(\M)$ and $u\in\max S_{\geq x}$.
Then $\rk(u)=\max\{|z|\st z\in I(\M)_{\leq u}\} = |x|=\rk(x)$.
\end{proof}

As with independence, the set of bases should locally look like matroid
bases  (as in \Cref{lem:bases} or \eqref{b1}--\eqref{b3}), but satisfy
reasonable compactibility \eqref{b4}. 
Matroid schemes also satsify a global basis exchange axiom \eqref{b3'}.

\begin{proposition}\label{lem:bases}
The set of bases $B(\M)$ for a matroid prescheme $\M=(S,\rk)$ is equal
to the disjoint union of the local basis sets $B(\M_x)$ where $x$ ranges
over the maximal elements of $S$.
\end{proposition}
\begin{proof}
First of all, the local basis sets are disjoint:
if $b$ is a basis of both $\M_x$ and $\M_y$,
then $x$ and $y$ would have a common upper bound by \eqref{i4}, 
thus  by maximality $x=y$. 
We prove that for any $x\in\max\S$,
$\max(I(\M))_{\leq x} = \max(I(\M)_{\leq x})$, implying $B(\M)_{\leq
x}=B(\M_x)$.
The containment ``$\subseteq$'' is immediate.
For the other direction, suppose that
$z\in\max(I(\M)_{\leq x})$. We must have $z\leq y$ for some basis $y\in
B(\M)$. Then by \eqref{i4}, $x\vee y\neq\emptyset$, and maximality of
$x$ then implies $y\leq x$. But then maximality of $z$ implies $z=y\in
B(\M)_{\leq x}$.
\end{proof}

\begin{theorem}
Let $S$ be a simplicial poset and let $B\subseteq S$. 
Then $B$ is the set of bases of a matroid prescheme on $S$ if and only
if $B$ satisfies the following properties:
\begin{enumerate}
\item[\mylabel{b1}{\textbf{B1}}] $B$ is nonempty.
\item[\mylabel{b2}{\textbf{B2}}] if $x,y\in B$ and $x\leq y$, then
$x=y$.
\item[\mylabel{b3}{\textbf{B3}}]
if $x,y\in B$, $u\in x\vee y$, and $a\in\at(S)$ with $a\leq x$, then
there exists $b\in\at(S)$ such that $b\leq y$ and 
$((x\setminus a)\vee b)_{\leq u}\subseteq B$.
\item[\mylabel{b4}{\textbf{B4}}] if $x,y\in S$, $z\in\max\{z'\in S \st
z'\leq x \text{ and } z'\leq b \text{ for some } b\in B\}$ and $z\leq
y$, then $x\vee y\neq\emptyset$.
\end{enumerate}
Moreover, $B$ is the set of bases for a matroid scheme on $S$ if and
only if $B$ satisfies the above properties with \eqref{b3} replaced by
the stronger condition:
\begin{enumerate}
\item[\mylabel{b3'}{\textbf{B3'}}] 
if $x,y\in B$ and $a\in\at(S)$ with $a\leq x$, then there exists
$b\in\at(S)$ such that $b\leq y$ and $(x\setminus a)\vee b$ is a
nonempty subset of $B$.
\end{enumerate}
\end{theorem}
\begin{proof}
First, let $\M=(S,\rk)$ be a matroid prescheme and $B=B(\M)$ its set of
bases. Properties \eqref{b1}--\eqref{b3} are local in nature; it
suffices to check for $B(\M)_{\leq m}$ for all $m\in\max S$.
This follows from \Cref{lem:bases} since $B_{\leq m}$ is the set of
bases for the local matroid $\M_m$.
Additionally, \eqref{b4} follows from \eqref{i4} since $\{z'\in S \st
z'\leq x, z\leq b \text{ for some } b\in B\} = I(\M)_{\leq x}$.

To establish \eqref{b3'} when $\M$ is a matroid scheme, 
let $x,y,a$ be as stated. Then
\eqref{i3'} implies that there is some $b\in\at(S)$ with $b\leq y$ and
$(x\setminus a)\vee b$ a nonempty subset of $I(\M)$. Any
$u\in (x\setminus a)\vee b$ must be maximal in $I(\M)$, i.e. $u\in
B(\M)$,  since $|u|=|x|=\rk(\M)$ by \Cref{cor:basisrank}.

Conversely, suppose that $B$ satisfies \eqref{b1}--\eqref{b4}, and let
$I=\{x\in S \st x\leq b \text{ for some } b\in B\}$. 
Then for any $m\in\max S$, $B_{\leq m}$ is the collection of bases for a
matroid on $S_{\leq m}$ with independence set $I_{\leq m}$. 
Conditions \eqref{i1}--\eqref{i3} follow.
Additionally, \eqref{b4} implies \eqref{i4} since $I_{\leq x} = \{z'\in
S\st z'\leq x, z\leq b \text{ for some } b\in B\}$.

Now assume further that \eqref{b3'} holds. Before proving \eqref{i3'},
we will first show that for any $x,y\in B$ we have $|x|=|y|$.
Suppose not, and among all pairs $(x,y)$ with $|x|>|y|$, pick one such
that $|x\setminus \ell|$, where $\ell\in x\wedge y$, is minimal. 
Then choose $a\in\at(S)$ with $a\leq x\setminus \ell$ (note that one
exists, since $x\not\leq y$). By \eqref{b3'},
there exists an atom $b\leq y$ with $(x\setminus a)\vee b$ a nonempty
subset of $B$. 
Notice that $b\not\leq x$, since otherwise we'd have some $u\in
(x\setminus a)\vee b$ with $u\leq x$, implying $u=x$ by \eqref{b2} and
thus $b=a\leq x \setminus \ell$ contradicting $b\leq y$.
Now let $u\in (x\setminus a)\vee b$ and $\ell'\in u\wedge y$ such that
$\ell'\geq \ell$. Since $\ell'\geq b$ as well, we have 
$|\ell'|=|\ell|+1$. But
then $|u|=|x|>|y|$ while
$|u\setminus \ell'|<|x\setminus\ell|$, contradicting our
minimality assumption.

Now, we argue for \eqref{i3'} by contradiction. Suppose 
that $x,y\in I$ with $|x|<|y|$ such that for all atoms $a\leq y$,
$a\not\leq x$, we have either $a\vee x=\emptyset$ or $a\vee x$ contains
an element not in $I$. 
Using \eqref{i4}, our assumption means
 that $(a\vee x)\cap I=\emptyset$ for all atoms $a\leq y$.
By definition of $I$, we can find $v,w\in B$ with $v\geq x$ and $w\geq
y$. We may assume that $w$ is chosen so that the size of $A(w) =
\{a\in\at(S) \st a\leq w, a\not\leq y, a\not\leq v\}$ is minimal.
First suppose that $a\in A(w)$. Then by \eqref{b3'} we have $b\in\at(S)$
and $z\in B$ such that $b\leq y$ and $z\in(w\setminus a)\vee b$.
But then $A(z)=A(w)\setminus\{a\}$, contradicting minimality of
$|A(w)|$. We conclude that $A(w)=\emptyset$. 

Let $A'=\{a'\in\at(S)\st a'\leq w\setminus y\}$. Since $A(w)=\emptyset$,
$v$ is an upper bound of $A'$ and hence there is some $u\in\bigvee A'$
such that $u\leq v$. Then $|x|+|u|=|x|+|w|-|y|<|w|=|v|$, so there must
be an atom $a\leq v$ with $a\not\leq x$ and $a\notin A'$.
By \eqref{b3'}, there exists an atom $b\leq w$ with $(v\setminus a)\vee
b$ a nonempty subset of $B$. Since $a\notin A'$, we cannot have $b\in
A'$. This means $b\leq y$, and since $x\leq v\setminus a$ this
contradicts our assumption that $(x\vee b)\cap I=\emptyset$.
\end{proof}

\section{Circuits}
Another common cryptomorphic definition of matroids is through its
circuits, which are minimally dependent. 
See \Cref{fig:circ} for an example.
Here, we similarly characterize the circuits of a matroid prescheme.
As usual, there are conditions \eqref{c1}--\eqref{c3} corresponding to a
local matroid structure and a compatibility condition \eqref{c4}
analogous to \eqref{x4}.

\begin{definition}
In a matroid prescheme $\M=(S,\rk)$, let $C(\M) = \min\{x\in S \st
\rk(x)<|x|\}$, the minimally dependent elements of $S$. An element of $C(\M)$
is called a \textbf{circuit}.
\end{definition}

\begin{remark}
Equivalently, $C(\M)=\min(S\setminus I(\M))$. In particular, if $x\in C(\M)$
and $y<x$ then $\rk(y)=|y|$. Moreover, any $x\in C(\M)$ has
$\rk(x)=|x|-1$.
\end{remark}

\begin{theorem}
Let $S$ be a simplicial poset and let $C\subseteq S$. Then $C$ is the
set of circuits for a matroid prescheme on $S$ if and only if $C$
satisfies the following properties:
\begin{enumerate}
\item[\mylabel{c1}{\textbf{C1}}]
$\zero\notin C$.
\item[\mylabel{c2}{\textbf{C2}}]
if $x,y\in C$ and $x\leq y$, then $x=y$.
\item[\mylabel{c3}{\textbf{C3}}]
if $x,y\in C$ with $x\neq y$ and $u\in x\vee y$, and $a\in\at(S)$ with
$a\leq x\wedge y$, then there exists $z\in C$ such that $z\leq u$ and
$z\not\geq a$.
\item[\mylabel{c4}{\textbf{C4}}]
if $x,y\in S$,
$z\in\max\{z'\in S \st z'\leq x, z'\not\geq c\text{ for any } c\in C\}$,
 and $z\leq y$, then $x\vee y\neq\emptyset$.
\end{enumerate}
\end{theorem}
\begin{proof}
First assume that $\M=(S,\rk)$ is a matroid prescheme, and let $C=C(\M)$
be its set of circuits.
Then \eqref{i1} implies \eqref{c1}, \eqref{c2} holds by minimality
in the definition of $C(\M)$, and
\eqref{c4} follows from \eqref{i4} since
$\{z'\st z'\leq x, z'\not\geq \text{ for any } c\in C\}=I(\M)_{\leq x}$.

To prove that \eqref{c3} holds for $C(\M)$, we recall our notation
for the complements $u\setminus a$, $x\setminus a$, $y\setminus a$, and
$(x\wedge y)\setminus a$ of $a$ in the Boolean lattices $S_{\leq u}$, $S_{\leq
x}$, $S_{\leq y}$, and $S_{\leq x\wedge y}$, respectively.
Then
\begin{align*}
\rk(u\setminus a)
&\leq \rk(u) &&\text{by \eqref{x2}}\\
&\leq \rk(x)+\rk(y)-\rk(x\wedge y) &&\text{by \eqref{x3}}\\
&=|x\setminus a| + |y\setminus a| - (|(x\wedge y)\setminus
a|+1) && \text{since } x,y \text{ are minimally dependent}\\
&=|u\setminus a|-1 && \text{since the Boolean lattice } S_{\leq u}
\text{ is modular}\\
\end{align*}
which implies $u\setminus a$ is dependent. Therefore, there is some $z\in C(\M)$
with $z\leq u\setminus a$, as desired.

Conversely, suppose that $C$ satisfies \eqref{c1}--\eqref{c4}, and let
$I=\{x\in S\st x\not\geq c \text{ for any } c\in C\}$.
Then \eqref{c1}--\eqref{c3} imply that for every $u\in\max S$,
$C_{\leq u}$ is the collection of
circuits for a matroid on $S_{\leq u}$ with independence poset $I_{\leq
u}$. Since $I_{\leq u}$ satisfies \eqref{i1}--\eqref{i3} for all
$u\in\max S$, and they are local in nature, $I$ itself satisfies
\eqref{i1}--\eqref{i3}. 
Finally, \eqref{i4} follows directly from \eqref{c4} since 
$I_{\leq x} = \{z'\st z'\leq x, z'\not\geq \text{ for any } c\in C\}$.
Thus, $I$ is the independence poset for a matroid prescheme on $S$.
Noting that $C$ is an antichain by \eqref{c2}, we must have 
 $C=\min(S\setminus I)$ is the set of
circuits for this matroid prescheme.
\end{proof}

\section{Loops and isthmuses}\label{sec:loops}
In this section, we discuss the ideas of loops and isthmuses. In matroid theory,
an isthmus is sometimes called a coloop because it is a loop in the dual
matroid. In our context, we do not know how duality fits into the story of
matroid schemes and hence favor the term isthmus.

\begin{proposition}\label{prop:loop}
Let $\M=(S,\rk)$ be a matroid prescheme and $a\in \at(S)$ an atom. The following
are equivalent.
\begin{enumerate}
\item $\rk(a)=0$.
\item $a\leq x$ for any flat $x\in F(\M)$.
\item $a\not\leq b$ for any basis $b\in B(\M)$.
\end{enumerate}
\end{proposition}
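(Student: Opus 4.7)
The plan is to establish the equivalences by proving $(1)\Leftrightarrow(2)$ and $(1)\Leftrightarrow(3)$, with axiom \eqref{x4} doing most of the work---consistent with the warning in \Cref{rmk:m4} that \eqref{x4} is essential for controlling loops.

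For $(1)\Rightarrow(2)$, let $x$ be a flat; I intend to invoke \Cref{lem:closure} to conclude $a\leq\cl(x)=x$. If $a\leq x$ this is immediate, so assume $a\not\leq x$. Since $a$ is an atom, $a\wedge x = \{\zero\}$, and as $\rk(\zero)=0=\rk(a)$, axiom \eqref{x4} applied with $\ell=\zero$ gives $a\vee x\neq\emptyset$. For any $u\in a\vee x$ the sandwich
\[ \rk(x)\leq \rk(u)\leq \rk(a)+\rk(x)-\rk(\zero)=\rk(x) \]
from \eqref{x2} and \eqref{x3} forces $\rk(u)=\rk(x)$, so \Cref{lem:closure} finishes the case. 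The converse $(2)\Rightarrow(1)$ is immediate: $\cl(\zero)$ is a flat of rank $0$ by construction, so (2) and \eqref{x2} give $\rk(a)\leq 0$.

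For $(1)\Rightarrow(3)$, the same closure argument applied with $x$ replaced by $m\in\max S$ shows $a\leq m$, since otherwise \eqref{x4} would produce $a\vee m\neq\emptyset$ in violation of maximality. If $a$ were below some basis $b$, then inside the Boolean lattice $S_{\leq b}$ we would have the complement $b\setminus a$ satisfying $a\wedge(b\setminus a)=\zero$ and $b\in a\vee(b\setminus a)$, so \eqref{x3} would force $\rk(b\setminus a)\geq \rk(b)-\rk(a)=|b|$, contradicting \eqref{x1} because $|b\setminus a|=|b|-1$.

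For $(3)\Rightarrow(1)$, pick any $m\in\max S$ containing $a$ (provided by (3)) and suppose for contradiction that $\rk(a)=1$. Then $a$ is independent in the local matroid $\M_m$ of \Cref{def:local}, so standard matroid theory extends $a$ to a basis $b$ of $\M_m$, which has $|b|=\rk(b)=\rk(m)=\rk(\M)$ by \Cref{prop:rk}\eqref{item:equirkmax}. Any $b'>b$ in $S$ would satisfy $|b'|>\rk(\M)\geq \rk(b')$ and hence fail to be independent, so $b$ is a basis of $\M$ containing $a$, contradicting (3). The delicate point is precisely this last step---transferring a local basis of $\M_m$ to a global basis of $\M$---but \Cref{prop:rk}\eqref{item:equirkmax} equating $\rk(m)$ with $\rk(\M)$ makes it routine.
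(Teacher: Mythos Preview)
Your proof is correct. You organise the argument as two bi-implications $(1)\Leftrightarrow(2)$ and $(1)\Leftrightarrow(3)$, whereas the paper proves the cycle $(1)\Rightarrow(3)\Rightarrow(2)\Rightarrow(1)$. The shared implications $(1)\Rightarrow(3)$ and $(2)\Rightarrow(1)$ are essentially the same in both treatments (the paper dispatches ``$a\not\leq b$'' slightly more quickly by observing $a\notin I(\M)$ and invoking \eqref{i2}, rather than your complement argument). The genuine divergence is that the paper proves $(3)\Rightarrow(2)$ by taking $u\in x\vee a$, choosing $y\in\max I(\M)_{\leq u}$, extending $y$ to a global basis, and using the hypothesis $a\not\leq b$ to force $y\leq x$ and hence $\rk(u)=\rk(x)$; you instead prove $(1)\Rightarrow(2)$ directly from \eqref{x4} and submodularity, which is cleaner, and then close the loop with a separate $(3)\Rightarrow(1)$ via the local matroid $\M_m$ and the augmentation property. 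Your route costs one extra implication but avoids the somewhat delicate step in the paper of passing between $\max I(\M)_{\leq u}$ and a global basis; the paper's cycle is more economical but leans more on the independence machinery of \Cref{sec:ind}.
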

\begin{proof}
\
\begin{description}
\item[$(1)\implies(2)$]
Let $x\in F(\M)$. Since $\rk(a)=0$, any $\ell\in x\wedge a$ must satisfy
$\rk(\ell)=\rk(a)$. Then $x\vee a\neq\emptyset$ by \eqref{x4}, so let $u\in
x\vee a$. We have $\rk(x)\leq\rk(u)\leq\rk(x)+\rk(a)-\rk(\ell)=\rk(x)$ by
\eqref{x2} and \eqref{x3}. It follows by \Cref{lem:closure} that $a\leq\cl(x)=x$.

\item[$(2)\implies(1)$]
We have by assumption that $a\leq \cl(\zero)$ and hence
$\rk(a)=\rk(\zero)=0$ by \Cref{lem:closure}.

\item[$(1)\implies(3)$]
Let $b\in B(\M)$. Since $\rk(a)<|a|$, we must have $a\notin I(\M)$, and so
by \eqref{i2} we have $a\not\leq b$.

\item[$(3)\implies(1)$]
We have by \eqref{x1} that $0\leq \rk(a)\leq |a|=1$. If $\rk(a)=1$, then $a\in
I(\M)$ and hence $a\leq b$ for some $b\in B(\M)=\max I(\M)$, which contradicts
our assumption. Thus, $\rk(a)=0$.
\end{description}
\end{proof}

\begin{proposition}\label{prop:isthmus}
Let $\M=(S,\rk)$ be a matroid prescheme and $a\in\at(S)$ an atom. The
following are equivalent.
\begin{enumerate}
\item if $x\in S_{\not\geq a}$ then $x\vee a\neq\emptyset$ and $\rk(u)=\rk(x)+1$
for any $u\in x\vee a$.
\item $a\leq b$ for any basis $b\in B(\M)$.
\item $a\leq m$ for any $m\in\max S$, and $a\not\leq c$ for any 
circuit $c\in C(\M)$.
\end{enumerate}
\end{proposition}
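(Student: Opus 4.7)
The plan is to prove the cyclic implications $(1)\Rightarrow(2)\Rightarrow(3)\Rightarrow(1)$, in the spirit of the cyclic proof of \Cref{prop:loop}. For $(1)\Rightarrow(2)$, suppose $b\in B(\M)$ with $a\not\leq b$. By (1), $b\vee a\neq\emptyset$, and for any $u\in b\vee a$, $\rk(u)=\rk(b)+1=|b|+1=|u|$, so $u$ is independent and strictly above $b$, contradicting $b\in\max I(\M)$.

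For the first half of $(2)\Rightarrow(3)$, given $m\in\max S$, pick $z\in\max I(\M)_{\leq m}$; then $|z|=\rk(m)=\rk(\M)$ by \Cref{prop:rk}\eqref{item:equirkmax}, so $z\in B(\M)$ and by (2) we get $a\leq z\leq m$. For the second half, suppose $c\in C(\M)$ with $a\leq c$; then $c\setminus a$ is independent and can be extended to a basis $b$, and by (2) also $a\leq b$, so $b$ is a common upper bound of $c\setminus a$ and $a$, and inside $S_{\leq b}$ we may pick $w\in(c\setminus a)\vee a$ with $w\leq b$. Then $w$ is independent, so $\rk(w)=|w|=|c|$; but $c\in(c\setminus a)\vee a$ has $\rk(c)=|c|-1$, contradicting \Cref{prop:rk}\eqref{item:equirkjoin}.

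For $(3)\Rightarrow(1)$, fix $x\in S_{\not\geq a}$. Any $m\in\max S_{\geq x}$ automatically lies in $\max S$, so (3) gives $a\leq m$, forcing $x\vee a\neq\emptyset$. Now fix $u\in x\vee a$; \Cref{prop:rk}\eqref{item:joinatom} gives $\rk(x)\leq\rk(u)\leq\rk(x)+1$. Suppose $\rk(u)=\rk(x)$ for contradiction, and pick $z'\in\max I(\M)_{\leq x}$; then $|z'|=\rk(x)$, $a\not\leq z'$, and $u$ is a common upper bound of $z'$ and $a$, so some $w\in z'\vee a$ satisfies $w\leq u$. Inside the Boolean lattice $S_{\leq u}$, this $w$ is the Boolean join, so $|w|=|z'|+1$ and $w\setminus a=z'$. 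Then $\rk(w)\leq\rk(u)=|z'|<|w|$, so $w$ is dependent and contains a circuit $c\leq w$; since every element of $S_{\leq w}$ not containing $a$ lies below $z'$ (which is independent), we must have $a\leq c$, contradicting (3). The main obstacle is this last step: the rank collapse $\rk(u)=\rk(x)$ must be leveraged to manufacture a circuit through $a$, and the trick is to replace $x$ by a maximal independent $z'\leq x$ so that the Boolean join $z'\vee a$ inside $u$ is forced to be dependent precisely because $\rk(u)=|z'|$.
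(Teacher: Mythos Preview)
Your proof is correct and follows essentially the same cyclic strategy $(1)\Rightarrow(2)\Rightarrow(3)\Rightarrow(1)$ as the paper, with the same key ideas in each step: extending $c\setminus a$ to a basis and invoking \Cref{prop:rk}\eqref{item:equirkjoin} for $(2)\Rightarrow(3)$, and for $(3)\Rightarrow(1)$ joining a maximal independent element below $x$ with $a$ to force a circuit through $a$. The only cosmetic difference is that in $(3)\Rightarrow(1)$ you assume $\rk(u)=\rk(x)$ and derive a contradiction, whereas the paper directly rules out $\rk(w)=\rk(z')$ and then squeezes $\rk(u)$; the content is the same.
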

\begin{proof}
\
\begin{description}
\item[$(1)\implies(2)$]
Let $b\in B(\M)$. If $a\not\leq b$, then by assumption there is some
$u\in b\vee a$ with $\rk(u)=\rk(b)+1 = |b|+1=|u|$. But then $u\in I(\M)$ and
$u>b$, contradicting maximality of the basis $b$.

\item[$(2)\implies(3)$]
Let $m\in\max S$, and $y\in\max I(\M)_{\leq m}$ so that $\rk(y)=\rk(m)$.
If $y$ is not a basis, then there is a basis $b>y$ and \eqref{x4} would
contradict maximality of $m$. So $y$ must be a basis, 
thus by assumption $a\leq y\leq m$.

Now let $c\in C(\M)$, and assume for contradiction that $a\leq c$. By minimality
of $c$, we have $c\setminus a\in I(\M)$. Let $b\in B(\M)$ with $b\geq c\setminus
a$. Then by assumption $a\leq b$, so there exists some $v\in (c\setminus a)\vee
a$ with $v\leq b$. By \eqref{i2},
 we have $\rk(v)=|v|$. 
Then $c\in (c\setminus a)\vee a$ 
implies $\rk(c)=\rk(v)=|v|=|c|$, contradicting that $c$ is dependent.

\item[$(3)\implies(1)$]
Let $x\in S_{\not\geq a}$. Since $a\leq m$ for any $m\in \max S$, we must have
$x\vee a\neq\emptyset$, so fix $u\in x\vee a$.
Also let $b\in \max
I(\M)_{\leq x}$ and $v\in b\vee a$ with $v\leq u$. If $\rk(v)=\rk(b)$, then there
exists a $c\in C(\M)$ with $c\leq v$. But since $b$ is independent, this implies
$c\geq a$, a contradiction. Thus, 
$\rk(x)=\rk(b)<\rk(v)\leq \rk(u)\leq \rk(x)+1$
and therefore $\rk(u)=\rk(x)+1$.
\end{description}
\end{proof}

\begin{definition}\label{def:loopisthmus}
In a matroid prescheme $\M=(S,\rk)$, we say an atom $a\in \at(S)$ is a
\textbf{loop} if any of the conditions of \Cref{prop:loop} hold, and $a$
is an \textbf{isthmus} if any of the conditions of
\Cref{prop:isthmus} hold.
\end{definition}

\begin{remark}
It follows that for a matroid prescheme $\M=(S,\rk)$ and an atom $a\in \at(S)$:
\begin{itemize}
\item $a$ is a loop of $\M$ if
and only if $a$ is a loop of some local matroid $\M_u$ where $u\in\max
S$.
\item $a$ is an isthmus of $\M$ if and only if $a$ is an isthmus of
every local matroid $\M_u$ where $u\in\max S$.
\end{itemize}
\end{remark}

\section{Deletion and contraction}
In this section, we discuss the deletion and contraction operations on
matroid preschemes. 
It is straightforward to check that each construction is in fact a
matroid prescheme. However, contraction might not preserve the global
property \eqref{x5} (see \Cref{ex:contr,fig:delcontr}).

\begin{proposition}\label{prop:del}
Let $\M=(S,\rk)$ be a matroid prescheme and $a\in\at(S)$ an atom. 
Then
$\M-a:=(S_{\not\geq a},\rk|_{S_{\not\geq a}})$ is a matroid prescheme.
If $\M$ is a matroid scheme, then $\M-a$ is a matroid scheme with rank
equal to $\rk(\M)-1$ if $a$ is an isthmus and $\rk(\M)$ otherwise.
\end{proposition}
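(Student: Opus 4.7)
The plan is to verify directly that $(S',\rk|_{S'})$ satisfies axioms \eqref{x1}--\eqref{x5}, where $S':=S_{\not\geq a}$, and then compute its rank by reducing to the local matroids $\M_m$ for $m\in\max S$. First I would observe that $S'$ is a simplicial poset: it contains $\zero$, and for each $x\in S'$ the interval $S'_{\leq x}$ equals $S_{\leq x}$ (since $y\leq x\in S'$ forces $a\not\leq y$), hence is Boolean. The key structural observation is that meets and joins from $S$ restrict cleanly to $S'$. Meets are immediate, since any lower bound of $x,y\in S'$ automatically lies in $S'$. For joins I claim $x\vee_{S'}y=x\vee_S y$ whenever $x,y\in S'$: given $u\in x\vee_S y$, assume for contradiction that $a\leq u$ and let $u^*$ denote the Boolean join of $x$ and $y$ in $S_{\leq u}$. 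Then $u^*\geq x,y$ and $u^*\leq u$, so minimality of $u$ in $S$ forces $u=u^*$; but the atoms of $u^*$ in $S_{\leq u}$ are $\at(S_{\leq x})\cup\at(S_{\leq y})$, which excludes $a$ since $a\not\leq x$ and $a\not\leq y$, contradicting $a\leq u=u^*$.

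With this coincidence of meets and joins, axioms \eqref{x1}--\eqref{x4} for $\M'$ follow directly from the corresponding axioms for $\M$. For \eqref{x5}, given $x,y\in S'$ with $\rk(x)<\rk(y)$, I would apply \eqref{x5} for $\M$ to produce $a'\in\at(S)$ with $a'\leq y$, $a'\not\leq x$, and $x\vee_S a'\neq\emptyset$; since $a'\leq y\in S'$ forces $a'\neq a$, we have $a'\in\at(S')$, and the coincidence of joins yields $x\vee_{S'}a'\neq\emptyset$.

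For the rank, note that $\rk$ is constant on $\max S'$ by \Cref{prop:rk}\eqref{item:equirkmax} applied to $\M'$. Suppose first that $a$ is not an isthmus. If some $m\in\max S$ has $a\not\leq m$, then $m\in\max S'$ and $\rk(\M')=\rk(m)=\rk(\M)$. Otherwise $a\leq m$ for every $m\in\max S$, and the remark following \Cref{prop:isthmus} furnishes some $m\in\max S$ in which $a$ is not an isthmus of the local matroid $\M_m$; standard matroid theory then gives $\rk(m\setminus a)=\rk(m)=\rk(\M)$, and extending $m\setminus a\in S'$ to any $u\in\max S'$ yields $\rk(\M')\geq\rk(\M)$, hence equality. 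If instead $a$ is an isthmus, then condition~(3) of \Cref{prop:isthmus} forces $\max S\cap S'=\emptyset$; for $u\in\max S'$, condition~(1) supplies $v\in u\vee a$ with $\rk(v)=\rk(u)+1$, and after extending $v$ to some $m\in\max S$ we see that $u\leq m\setminus a\in S'$ in $S_{\leq m}$ (because $\at(S_{\leq u})\subseteq\at(S_{\leq m})\setminus\{a\}$), so maximality of $u$ gives $u=m\setminus a$, whence $\rk(u)=\rk(m)-1=\rk(\M)-1$ since $a$ is an isthmus in $\M_m$. The main subtlety is the join-coincidence argument in the first paragraph; once it is in hand, the axioms are routine and the rank computation reduces to a case split on local matroids.
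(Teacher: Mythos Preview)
Your proposal is correct and is precisely the kind of direct verification the paper has in mind; the paper's own proof consists only of the sentence ``This is straightforward to verify,'' so you have simply supplied the omitted details. The join-coincidence observation you single out is indeed the only nontrivial point, and your argument for it (and the subsequent case split for the rank using the local matroids $\M_m$) is sound.
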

\begin{proof}
This is straightforward to verify.
\end{proof}

\begin{definition}\label{def:del}
Let $\M=(S,\rk)$ be a matroid prescheme and $a\in\at(S)$ an atom. The
\textbf{deletion} of $\M$ by $a$ is the matroid
prescheme $\M-a=(S_{\not\geq a},\rk|_{S_{\not\geq a}})$ from \Cref{prop:del}.
\end{definition}

\begin{remark}\label{rmk:restrict}
One can also define the \textbf{restriction} of a matroid prescheme by repeatedly
applying the deletion operation.
Given a set of atoms $A\subseteq\at(\S)$, the successive deletion of atoms not
in $A$ gives rise to a matroid prescheme $\M[A]:=(\angles{A},\rk|_{\angles{A}})$ where
$\rk$ is restricted to the order ideal
\[\angles{A}:=\{x\in \S \st x\in\bigvee T, \text{ for some } T\subseteq A\}.\]
\end{remark}

\begin{proposition}\label{prop:contr}
Let $\M=(S,\rk)$ be a matroid prescheme and $x\in S$. Define $\rk_{/x}:S_{\geq
x}\to\Z_{\geq0}$ by $\rk_{/x}(w)=\rk(w)-\rk(x)$. 
Then
 $\M_{/x}:=(S_{\geq x},\rk_{/x})$ is a matroid  prescheme.
If $\M$ is pure of rank $\rk(\M)$, then $\M_{/x}$ is pure with
rank equal to $\rk(\M)-\rk(x)$.
\end{proposition}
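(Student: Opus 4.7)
The plan is to first verify that $S_{\geq x}$ is itself a simplicial poset, and then check each axiom \eqref{x1}--\eqref{x5} for $(S_{\geq x}, \rk_{/x})$ by leveraging the corresponding axiom for $\M$. For the simplicial poset structure, $S_{\geq x}$ has minimum $x$, and any closed interval $[x,w] \subseteq S_{\leq w}$ is Boolean since it is an interval in a Boolean lattice; in particular the atoms of $S_{\geq x}$ are the elements of $S$ covering $x$, and the rank in $S_{\geq x}$ is $|w|-|x|$. I will also record how joins and meets restrict: for $w_1, w_2 \in S_{\geq x}$, any upper bound is automatically $\geq x$, so $w_1 \vee_{S_{\geq x}} w_2 = w_1 \vee_S w_2$; meanwhile, any $\ell$ maximal among lower bounds in $S_{\geq x}$ is also maximal in $S$, since any strictly larger lower bound would still lie above $x$. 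When $w_1 \vee_S w_2 \neq \emptyset$, the meet is a single element (by the observation at the end of Section 3) which must lie above $x$, so the two meets agree.

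Axioms \eqref{x2} and \eqref{x3} then transfer directly, the latter using the agreement of joins and meets just noted. Axiom \eqref{x1} requires $\rk(w)-\rk(x) \leq |w|-|x|$, which I prove by applying \eqref{x3} for the local matroid $\M_w$ (\Cref{def:local}) to $x$ and the complement $w \setminus x$ in $S_{\leq w}$, yielding $\rk(x) + \rk(w\setminus x) \geq \rk(w)$, combined with $\rk(w \setminus x) \leq |w|-|x|$ from \eqref{x1} for $\M$. For \eqref{x4}, if $\ell \in w_1 \wedge_{S_{\geq x}} w_2$ satisfies $\rk_{/x}(w_1)=\rk_{/x}(\ell)$, then $\ell$ is also in $w_1 \wedge_S w_2$ with $\rk(w_1)=\rk(\ell)$, so \eqref{x4} for $\M$ supplies the needed join, which automatically lies in $S_{\geq x}$.

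For \eqref{x5}, applying \eqref{x5} for $\M$ to the inequality $\rk(w_1)<\rk(w_2)$ produces an atom $a \in \at(S)$ with $a \leq w_2$, $a \not\leq w_1$, and $w_1 \vee a \neq \emptyset$. Since $x \leq w_1$ forces $a \not\leq x$, the join $a' := x \vee a$ is a single element covering $x$, hence an atom of $S_{\geq x}$; one checks $a' \leq w_2$, $a' \not\leq w_1$ (else $a \leq a' \leq w_1$), and $w_1 \vee_{S_{\geq x}} a' = w_1 \vee_S a \neq \emptyset$. Finally, for the rank claim, any $u \in \max(S_{\geq x})$ is also in $\max S$ (anything strictly above would still lie in $S_{\geq x}$), so by \Cref{prop:rk}\eqref{item:equirkmax} we obtain $\rk_{/x}(u) = \rk(\M)-\rk(x)$. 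The only mildly delicate point is the careful bookkeeping of how joins and meets of $S$ restrict to $S_{\geq x}$, which is precisely what makes the transfers of \eqref{x3} and \eqref{x4} go through.
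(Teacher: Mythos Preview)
The paper only writes ``straightforward to verify'' for this proposition, so your explicit argument fills in useful detail, and your handling of the simplicial structure of $S_{\geq x}$, of axioms \eqref{x1}--\eqref{x4}, and of the rank statement is correct and follows the natural line.

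There is, however, a real gap in your verification of \eqref{x5}. You assert that ``the join $a':=x\vee a$ is a single element covering $x$'', but in a simplicial poset the set $x\vee_S a$ of minimal upper bounds need not be a singleton even when $a$ is an atom: in the left-hand poset of \Cref{fig:cw}, the two right-most atoms have two distinct minimal upper bounds. Once $x\vee_S a$ can have several elements, your claimed equality $w_1\vee_{S_{\geq x}}a' = w_1\vee_S a$ breaks down: an upper bound $u$ of $\{w_1,a\}$ satisfies $u\geq x$ and $u\geq a$, hence lies above \emph{some} member of $x\vee_S a$, but not necessarily above the particular $a'$ you selected (say, the one inside $S_{\leq w_2}$). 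Symmetrically, if you instead take $a'$ to be the element of $x\vee_S a$ lying below a fixed $u\in w_1\vee_S a$, then $w_1\vee a'\neq\emptyset$ is immediate but $a'\leq w_2$ is no longer automatic, since in a simplicial poset the containment $\at(S_{\leq a'})\subseteq\at(S_{\leq w_2})$ does not force $a'\leq w_2$. Repairing \eqref{x5} requires either a more careful choice of the cover of $x$ together with an appeal to \eqref{x4} (noting that $a'\wedge_S w_1=\{x\}$), or an additional exchange step; as written, this part of the argument does not go through.
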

\begin{proof}
This is straightforward to verify.
\end{proof}

\begin{definition}\label{def:contr}
Let $\M=(S,\rk)$ be a matroid prescheme and $x\in S$. The
\textbf{contraction} of $\M$ by $x$ is the matroid prescheme $\M_{/x}=(S_{\geq
x},\rk_{/x})$ from \Cref{prop:contr}.
\end{definition}

\begin{example}\label{ex:contr}
In general, the contraction of a matroid scheme might not satisfy
condition \eqref{x5}. For example, consider
the matroid scheme $\M=(\S,\rk)$ where $S$ is the simplicial poset
depicted in \Cref{fig:M} and $\rk(w)=|w|$ for all $w\in
\S$. A contraction $\M_{/a}$ by atom $a$ is depicted in
\Cref{fig:M/a} and does not satisfy \eqref{x5}.

\begin{figure}[hb]
\begin{subfigure}[t]{.3\textwidth}
\centering
\begin{tikzpicture}[scale=1.3]
\node at (-.2,.8) {$a$};
\labelnodes
\node (0) at (0,0) {$0$};
\node (u) at (-1,3) {$3$};
\node (v) at (1,3) {$3$};
\foreach \x in {1,2,3} {
\node (a\x) at (\x-2,1) {$1$};
\node (b\x) at (0.5*\x-2,2) {$2$};
\node (c\x) at (0.5*\x,2) {$2$};
\draw[-] (a\x)--(0);
\draw[-] (b\x)--(u);
\draw[-] (c\x)--(v);
}
\foreach \x in {1,2} {
\draw[-] (c\x)--(a1)--(b\x);
}
\foreach \x in {1,3} {
\draw[-] (c\x)--(a2)--(b\x);
}
\foreach \x in {3,2} {
\draw[-] (c\x)--(a3)--(b\x);
}
\end{tikzpicture}
\caption{$\M$}
\label{fig:M}
\end{subfigure}
\hfill
\begin{subfigure}[t]{.3\textwidth}
\centering
\begin{tikzpicture}[scale=1.3]
\opennodes
\node (0) at (0,0) {};
\node (b2) at (-1,2) {};
\node (c2) at (1,2) {};
\labelnodes
\node (u) at (-1,3) {$2$};
\node (v) at (1,3) {$2$};
\node (a2) at (0,1) {$0$};
\foreach \x in {1,3} {
\opennodes
\node (a\x) at (\x-2,1) {};
\labelnodes
\node (b\x) at (0.5*\x-2,2) {$1$};
\node (c\x) at (0.5*\x,2) {$1$};
\draw[-,dashed] (a\x)--(0);
\draw[-,thick] (b\x)--(u);
\draw[-,thick] (c\x)--(v);
}
\foreach \x in {1,2} {
\draw[-,dashed] (c\x)--(a1)--(b\x);
}
\foreach \x in {1,3} {
\draw[-,thick] (c\x)--(a2)--(b\x);
}
\foreach \x in {3,2} {
\draw[-,dashed] (c\x)--(a3)--(b\x);
}
\draw[-,dashed] (0)--(a2);
\draw[-,dashed] (b2)--(u);
\draw[-,dashed] (c2)--(v);
\end{tikzpicture}
\caption{$\M_{/a}$}
\label{fig:M/a}
\end{subfigure}
\hfill
\begin{subfigure}[t]{.3\textwidth}
\centering
\begin{tikzpicture}[scale=1.3]
\labelnodes
\node (0) at (0,0) {$0$};
\node (b2) at (-1,2) {$2$};
\node (c2) at (1,2) {$2$};
\opennodes
\node (u) at (-1,3) {};
\node (v) at (1,3) {};
\node (a2) at (0,1) {};
\foreach \x in {1,3} {
\labelnodes
\node (a\x) at (\x-2,1) {$1$};
\opennodes
\node (b\x) at (0.5*\x-2,2) {};
\node (c\x) at (0.5*\x,2) {};
\draw[-,thick] (a\x)--(0);
\draw[-,dashed] (b\x)--(u);
\draw[-,dashed] (c\x)--(v);
}
\draw[-,dashed] (b1)--(a1)--(c1);
\draw[-,thick] (a3)--(b2)--(a1)--(c2)--(a3);
\draw[-,dashed] (b3)--(a3)--(c3);
\foreach \x in {1,3} {
\draw[-,dashed] (c\x)--(a2)--(b\x);
}
\draw[-,dashed] (0)--(a2);
\draw[-,dashed] (b2)--(u);
\draw[-,dashed] (c2)--(v);
\end{tikzpicture}
\caption{$\M-a$}
\label{fig:M-a}
\end{subfigure}

\caption{A matroid scheme $\M$ alongside its contraction by atom $a$ and
deletion of atom $a$. See \Cref{ex:contr}.}
\label{fig:delcontr}
\end{figure}

In order for all contractions of a matroid scheme $\M$ to again be
matroid schemes, we would need $\M=(\S,\rk)$ to satisfy a stronger
condition such as:
\begin{enumerate}
\item[\mylabel{m5''}{\textbf{M5''}}] 
if $x,y\in S$, $\rk(x)<\rk(y)$, and $\ell\in x\wedge y$, then there is
some $z\in S$ with $\ell<z\leq y$ and $z\vee x\neq\emptyset$.
\end{enumerate}

\end{example}

\begin{remark}
When $\M=(S,\rk)$ is a semimatroid and $a\in \at(S)$,
Ardila's definition of contraction for by $a$ \cite[Def.
7.4]{ardila} would translate to the matroid scheme on the
simplicial semilattice 
\[S_{/a}:=\{z\in S\st z\not\geq a, z\vee a\neq\emptyset\}\]
with the rank of $z$ equal to $\rk(z\vee a)-\rk(a)$.
This agrees with our definition via the poset isomorphism
$\phi:S_{\geq a}\to S_{/a}$ which takes $u\in S_{\geq a}$ to $u\setminus a$, the
complement of $a$ in the Boolean lattice $S_{\leq u}$.

When $S$ is not a semilattice, the analogous map $\phi$ is generally surjective
but not injective.
We chose our formulation of contraction because, when the matroid scheme is realized
by an abelian arrangement, the contraction represents a topological operation
in the same way matroid contraction does for hyperplane arrangements (see
\Cref{thm:arr}).
\end{remark}

\begin{proposition}\label{prop:del=contr}
Let $\M=(S,\rk)$ be a matroid prescheme and $a\in \at(S)$. If $a$ is a loop,
then there is an isomorphism $\M-a\cong\M_{/a}$.
\end{proposition}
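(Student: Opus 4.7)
The plan is to construct an explicit simplicial-poset isomorphism $\phi\colon S_{\not\geq a}\to S_{\geq a}$ by sending $x$ to the unique element of $x\vee a$, with inverse $\psi(u)=u\setminus a$ (the complement of $a$ in the Boolean interval $S_{\leq u}$). Note $\psi(u)\in S_{\not\geq a}$ because $(u\setminus a)\wedge a=\zero$, and $\phi(x)\in S_{\geq a}$ by construction.

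The crux is establishing $|x\vee a|=1$ for every $x\in S_{\not\geq a}$. Nonemptiness is immediate from \Cref{prop:loop}: since $a$ is a loop, $a\leq m$ for every $m\in\max S$, so any maximal element above $x$ is a common upper bound of $\{x,a\}$. For uniqueness, since $a$ is an atom with $a\not\leq x$ we have $x\wedge a=\zero$; then \eqref{x3} combined with $\rk(a)=0$ gives $\rk(u)\leq\rk(x)$ for any $u\in x\vee a$, and \eqref{x2} forces equality. Hence $u\leq\cl(x)$, so every minimal common upper bound of $\{x,a\}$ lies in the Boolean lattice $S_{\leq\cl(x)}$, where joins are unique.

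The remaining verifications are routine. Both composites $\phi\circ\psi$ and $\psi\circ\phi$ reduce to identities inside a single Boolean interval by comparing atom sets (for $\phi\circ\psi$, use $\rk(u)=\rk(u\setminus a)$ to place $u$ within $S_{\leq\cl(u\setminus a)}$). For order preservation, if $x_1\leq x_2$ then $\phi(x_2)$ is a common upper bound of $\{x_1,a\}$, and in a finite poset every upper bound dominates a minimal one, which by uniqueness must be $\phi(x_1)$. The reverse implication follows because $u_1\leq u_2$ in a simplicial poset forces $\at(S_{\leq u_1})\subseteq\at(S_{\leq u_2})$, hence $u_1\setminus a\leq u_2\setminus a$. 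Rank preservation is automatic: $\rk_{/a}(\phi(x))=\rk(\phi(x))-\rk(a)=\rk(x)-0$, matching the rank on the deletion.

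The main obstacle is precisely the singleton claim $|x\vee a|=1$: a priori, $\{x,a\}$ could have several pairwise incomparable minimal upper bounds in $S$, and nothing in the definition of a simplicial poset rules this out. The loop hypothesis $\rk(a)=0$ combined with submodularity is exactly what pins all such upper bounds beneath $\cl(x)$, where the Boolean-lattice structure forces uniqueness and lets the rest of the argument go through locally.
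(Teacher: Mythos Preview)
Your proof is correct and follows essentially the same strategy as the paper: build the isomorphism via the complement map $u\mapsto u\setminus a$, with the key step being that $|x\vee a|=1$ for every $x\in S_{\not\geq a}$. The only minor difference is tactical: for uniqueness you trap all minimal upper bounds inside the Boolean interval $S_{\leq\cl(x)}$ via the closure operator, whereas the paper instead takes two candidates $z,z'\in x\vee a$, applies \eqref{x4} to produce a common upper bound $m$, and then argues uniqueness inside $S_{\leq m}$; both routes amount to the same ``force the join into a Boolean lattice'' idea.
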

\begin{proof}
We prove that there is a poset isomorphism $\phi:S_{\geq a}\to S_{\not\geq a}$
for which $\rk(z)=\rk(\phi(z))$.
The map $\phi$ is defined by setting $\phi(z)=z\setminus a$ where $z\setminus a$
is the unique complement of $a$ in the Boolean lattice $S_{\leq z}$.
It is clear that this map is order-preserving, and it is rank-preserving because
\eqref{x2}, \eqref{x3}, and $\rk(a)=0$ together imply
\[\rk(z)=\rk(z)+\rk(\zero)\leq \rk(z\setminus a)+\rk(a) = \rk(z\setminus a)\leq
\rk(z)\]
thus $\rk(z)=\rk(z\setminus a)$.

It now suffices to prove that for any $u\in S_{\not\geq a}$, there exists a
unique $z\in u\vee a$.
Such a $z$ must exist by \eqref{x4}, since $\zero=u\wedge a$ and
$\rk(\zero)=\rk(a)$.
For uniqueness, suppose that $z,z'\in u\vee a$ and $\ell\in z\wedge z'$ such
that $u\leq \ell$. Then \eqref{x2}, \eqref{x3}, and $\rk(a)=0$ together imply
\[\rk(z)=\rk(z)+\rk(\zero)\leq \rk(u)+\rk(a)=\rk(u)\leq \rk(\ell)\leq \rk(z),\]
thus $\rk(\ell)=\rk(z)$. Then \eqref{x4} implies that $z$ and $z'$ have a
common upper bound, say $m$. But then $u\vee a$ must be unique in the lattice
$S_{\leq m}$, hence  $z=z'$.
\end{proof}

\begin{remark}\label{rmk:isthmus}
If $\M$ is a semimatroid, then it is also true
that $\M-a\cong\M_{/a}$ when $a$ is an isthmus \cite[Prop. 7.12]{ardila}.
However, the same does not hold in our generality. The added difficulties we have
surrounding an isthmus is likely correlated to the difficulty of matroid duality
in our setting.
As an example, consider the matroid scheme depicted in
\Cref{fig:delcontr} along with
its contraction and deletion with respect to the isthmus $a$.
\end{remark}

\section{Geometric posets}
In this section, we classify posets which arise as the poset of flats of a
matroid (pre)scheme. This generalizes the well-known result that a
simple matroid is equivalent to a geometric lattice.

\begin{definition}\label{def:geom}
A \textbf{locally geometric poset} is a bounded below, ranked poset
$(P,\rk)$ satisfying:
\begin{enumerate}
\item[\mylabel{g1}{\textbf{G1}}] 
Every maximal interval is a geometric lattice.
\end{enumerate}
A \textbf{geometric poset} is a locally geometric poset satisfying the
additional condition:
\begin{enumerate}
\item[\mylabel{g2}{\textbf{G2}}]
for every $x\in P$, $A\subseteq \at(P)$, and $y\in \bigvee A$ such that
$\rk(x)<\rk(y)=|A|$, there exists an $a\in A$ such that $a\not\leq x$ and $a\vee
x\neq\emptyset$.
\end{enumerate}
\end{definition}

\begin{remark}
A lattice or a meet-semilattice is geometric in the sense of \Cref{def:geom} if
and only if it is a geometric lattice or geometric semilattice. Indeed, it is
easy to see that axioms \eqref{g1} and \eqref{g2} translate exactly to the
criteria (G3) and (G4) of Wachs--Walker \cite[Thm 2.1]{WW}.
This definition of geometric poset was also given in \cite[Def. 4.1.1]{BD}, but
here our goal is to make the connection to matroid schemes.
\end{remark}

\begin{example}\label{ex:geom}
The posets depicted in \Cref{fig:flats} are locally geometric.
The first two of those posets also satsify \eqref{g2}, hence are
geometric posets. The third poset in \Cref{fig:flats3} is not
geometric: the elements labelled by $x$ and $y$ do not satisfy the
requirement of $\eqref{g2}$.

\end{example}

Geometric posets arise as the poset of flats of a matroid scheme, however there
are many matroid schemes which have isomorphic posets of flats. Nevertheless,
there is a particular representative matroid scheme which we can single out using
the following definition.
\begin{definition}\label{def:simple}
A matroid scheme $\M=(S,\rk)$ is \textbf{simple} if every atom $a\in \at(S)$ is a flat
and not a loop.
\end{definition}

\begin{example}
The matroid scheme in \Cref{fig:ex1} is simple while that in
\Cref{fig:ex2} is not (since the atoms are not flats).
 In fact, for this
non-simple matroid scheme, its poset of flats (\Cref{fig:flats2})
can itself be viewed as a matroid scheme (with the same poset of flats).
\end{example}

\begin{theorem}\label{thm:GP=MS}
A poset is geometric if and only if it is isomorphic to the poset of flats of a
matroid scheme. Furthermore, each geometric poset is the poset of flats of a unique
simple matroid scheme, up to isomorphism.
The same is true for locally geometric posets and matroid preschemes.
\end{theorem}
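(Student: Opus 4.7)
The proof splits into three parts: showing the poset of flats of any matroid scheme is geometric, constructing a simple matroid scheme whose flats realize a given geometric poset, and verifying uniqueness of that construction.

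\textbf{Flats are geometric.} For $\M = (S, \rk)$, first observe that $\max F(\M) = \max S$: maximal elements of $S$ are flats by Proposition~\ref{prop:rk}\eqref{item:equirkmax} (rank is constant there, so the closure cannot strictly exceed them), and any maximal flat is maximal in $S$ since otherwise its closure would be strictly larger. Axiom~\eqref{g1} then follows from Proposition~\ref{prop:localflats}: each maximal interval of $F(\M)$ has the form $F(\M_u)$ for $u \in \max S$, and each $\M_u$ is a matroid so $F(\M_u)$ is a geometric lattice. For~\eqref{g2} with $x, A, y$ such that $y \in \bigvee_F A$ and $|A| = \rk(y) > \rk(x)$, I apply~\eqref{x5} in $S$ to obtain an atom $a' \in \at(S)$ with $a' \leq y$, $a' \not\leq x$, and $x \vee_S a' \neq \emptyset$. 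Its closure $\cl(a')$ is an atom of $F(\M)$ below $y$, and working inside a maximal interval $F(\M_m)$ with $m \geq y$, basis exchange against the basis $A$ of $y$ extracts an $a \in A$ (from the fundamental circuit of $\cl(a')$ over $A$) with $a \not\leq x$; the condition $a \vee_F x \neq \emptyset$ is deduced by tracking the join through the exchange and invoking Lemma~\ref{lem:closure2}.

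\textbf{Construction.} Given a geometric poset $P$, I construct
\[
T := \{(A, y) : A \subseteq \at(P),\ y \in \textstyle\bigvee_P A\}
\]
(with the convention $\bigvee_P \emptyset := \{\hat{0}_P\}$), ordered by $(A, y) \leq (B, z)$ iff $A \subseteq B$ and $y \leq z$ in $P$, with $\rk(A, y) := \rk_P(y)$. The key structural step is that $T$ is a simplicial poset: for each $(A, y) \in T$ and $A' \subseteq A$, there is a \emph{unique} $y' \leq y$ in $P$ with $y' \in \bigvee_P A'$. Uniqueness uses axiom~\eqref{g1} (so $P_{\leq y}$ is a geometric lattice where $A'$ has a unique join) together with a minimality argument upgrading this join to a minimum upper bound in all of $P$. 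Axioms~\eqref{x1}--\eqref{x3} then reduce to semimodularity in the geometric lattices $P_{\leq y}$; \eqref{x4} is immediate because the rank equality forces the meet's second coordinate to equal $y$, yielding $y \leq z$ and hence the upper bound $(A \cup B, z) \in T$; and \eqref{x5} translates~\eqref{g2} after first passing to a basis $B' \subseteq B$ of $z$ inside the geometric lattice $P_{\leq z}$ (which exists since spanning sets contain bases). The flats of $(T, \rk)$ are precisely the pairs $(\at(P_{\leq y}), y)$ for $y \in P$, giving $F(T, \rk) \cong P$ canonically, and simplicity is immediate (each atom $(\{a\}, a)$ has rank $1$ and equals its own closure).

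\textbf{Uniqueness.} Given any simple matroid scheme $\M = (S, \rk)$ with $F(\M) \cong P$, simplicity canonically identifies $\at(S) = \at(P)$, and I define $\phi : S \to T$ by $\phi(x) := (\at(S_{\leq x}), \cl(x))$. The simplicial structure of $S$ together with Proposition~\ref{prop:closure} ensures $\cl(x)$ is a minimum upper bound of $\at(S_{\leq x})$ in $F(\M)$, so $\phi$ is well-defined; it is then a rank- and order-preserving bijection by direct verification. The main obstacle throughout is verifying~\eqref{x5} and its counterpart~\eqref{g2}, both of which require careful bookkeeping with choices of minimum upper bounds in $P$: in the first part, extracting an element of $A$ from the atom produced by~\eqref{x5} via basis exchange; in the construction, ensuring that the common upper bound of $(A, y)$ and $(\{a\}, a)$ in $T$ actually sits above $y$ (which forces one to pick $u \in a \vee_P y$ and do everything inside the geometric lattice $P_{\leq u}$ where joins are unique). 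Managing these choices consistently drives most of the technical work.
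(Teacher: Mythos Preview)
Your construction and uniqueness arguments match the paper's (you phrase uniqueness as ``any simple $\M$ is isomorphic to the canonical $T$'' rather than ``any two simple schemes with the same flats are isomorphic'', but these are equivalent and your formulation is arguably tidier). The substantive divergence is in verifying \eqref{g2} for $F(\M)$, and there your argument has a gap.

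You apply \eqref{x5} to $x$ and $y$ directly, obtaining $a'\in\at(S)$ with $a'\leq y$, $a'\not\leq x$, and $x\vee_S a'\neq\emptyset$, then attempt to trade $\cl(a')$ for some $a\in A$ via the fundamental circuit $C=\{\cl(a')\}\cup A'$ in the geometric lattice $F_{\leq y}$. One can indeed show some $a\in A'$ has $a\not\leq x$. But your assertion that $a\vee_F x\neq\emptyset$ does not follow from what you have written. The only common upper bound available is $\cl(u)$ for $u\in x\vee_S a'$, and there is no reason the particular $a\in A'$ you selected satisfies $a\leq\cl(u)$: the circuit relation lives in $F_{\leq y}$, whereas $\cl(u)$ need have no relation to $y$. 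Lemma~\ref{lem:closure2} lifts joins from $F$ to $S$; it does not transport the nonemptiness of $x\vee_F\cl(a')$ along a circuit to $x\vee_F a$. The phrase ``tracking the join through the exchange'' names the difficulty without resolving it.

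The paper sidesteps this by reversing the order of operations. It first lifts each $a\in A$ to an atom $\tilde a\in\at(S_{\leq y})$ with $\cl(\tilde a)=a$, takes $z\in\bigvee_S\{\tilde a:a\in A\}$ inside the Boolean lattice $S_{\leq y}$ (so that $\cl(z)=y$ and $\at(S_{\leq z})$ is exactly the set of lifts), and only then applies \eqref{x5} to $x$ and $z$. The atom produced by \eqref{x5} is then forced to be one of the $\tilde a$, so its closure lies in $A$ and both conditions $a\not\leq_F x$ and $a\vee_F x\neq\emptyset$ come out simultaneously from $\tilde a\not\leq_S x$ and $\tilde a\vee_S x\neq\emptyset$. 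No basis exchange is needed.
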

\begin{proof}
Let $\M=(\S,\rk)$ be a matroid scheme and $\F=\F(\M)$ its poset of flats; we will
show that $\F$ is geometric. We immediately have that  $\F$ is bounded below by
$\cl(\zero)$, and it is ranked and locally geometric 
since for any $x\in\F$, $\F_{\leq x}$ is the lattice of flats of the local
matroid $\M_x=(S_{\leq x},\rk|_{S_{\leq x}})$ (\Cref{prop:localflats}).

To see that $\F$ satisfies \eqref{g2}, let $x\in\F$, $A\subseteq \at(\F)$,
and $y\in\bigvee_F A$ such that $\rk(x)<\rk(y)=|A|$. 
We may pick, for each $a\in A$, some 
$\tilde{a}\in\at(S_{\leq y})$ with $\cl(\tilde{a})=a$, such that there is some
$z\in\bigvee_{a\in A}\tilde{a}$ with $\cl(z)=y$ (\Cref{lem:closure2}).
Then $\rk(x)<\rk(z)$, and so by \eqref{x5} there is some $a\in A$ such that
$\tilde{a}\leq_S z\leq_S y$, $\tilde{a}\not\leq_S x$, and $x\vee_S
\tilde{a}\neq\emptyset$.
Now, $\tilde{a}\not\leq_S x$ and $\tilde{a}\leq_S a$ implies $a\not\leq_S x$,
hence $a\not\leq_F x$.
Moreover, for any $u\in\tilde{a}\vee_S x$, $\cl(u)$ is an upper bound of both $a$
and $x$ (in both $S$ and $F$), hence $a\vee_F x\neq\emptyset$.
This implies that $\F(\M)$ is a geometric poset.

For the converse, 
 let $P$ be a geometric poset with rank function $\rank_P$; we will construct
a matroid scheme whose poset of flats is equal to $P$.
Define a poset $\S$ as the set \[S = \{(I,x)\in
\boo(\at(P))\times P\st x\in\bigvee_P I\}\] with partial order \[(I,x)\leq_S(J,y) \iff 
I\subseteq J \text{ and } x\leq_P y.\] 
To see that $\S$ is a simplicial poset, let $(I,x)\in\S$, and we claim that
the function $S_{\leq(I,x)}\to\boo(I)$ given by $(J,y)\mapsto J$ is an
isomorphism of posets. Indeed, for any $J\subseteq I$, $x$ is an upper bound of
$J$ in $P$ and hence there is a $y\in\bigvee_P J$ such that $y\leq_P x$; $y$ is
unique because $P_{\leq x}$ is a lattice.

On this simplicial poset, define a function $\rk:\S\to\Z_{\geq0}$ by $\rk(I,x)
= \rank_P(x)$.
Using the fact that $P$ is locally geometric, we see that the local conditions
\eqref{x1}--\eqref{x3} are immediate. Indeed, for
any $(I,x)\in\S$, restricting $\rk$ to $S_{\leq(I,x)}$
defines a matroid structure whose lattice of flats is $P_{\leq x}$.

For \eqref{x4},
suppose that $(L,\ell)\in(I,x)\wedge_S(J,y)$ such that
$\rank_P(\ell)=\rk(L,\ell)=\rk(I,x)=\rank_P(x)$. 
Since $\ell\leq_P x$ and $\rank_P(\ell)=\rank_P(x)$, we must have
$\ell=x\in\bigvee_P I$. Since $y\geq_P \ell$ and $y\in\bigvee_P J$, we have that
$y\geq_P a$ for all $a\in I\cup J$. It follows that $(I\cup J,y)\in
(I,x)\vee_S(J,y)$.

To see \eqref{x5}, suppose that $\rank_P(x) = \rk(I,x) < \rk(J,y) = \rank_P(y)$.
Then \eqref{g2} implies there is some $a\in J$ such that $a\leq_P y$,
$a\not\leq_P x$, and $x\vee_P a\neq\emptyset$. Letting $u\in x\vee_P a 
\subseteq \bigvee_P (I\cup \{a\})$, we
have $(\{a\},a)\leq_S (J,y)$, $(\{a\},a)\not\leq_S (I,x)$, and $(I\cup \{a\},u)\in
(\{a\},a)\vee_S(I,x)$.

We need to show that $P$ is the poset of flats of the matroid scheme $(S,\rk)$ just
constructed. Via the injection $P\into S$ given by $x\mapsto (\at(P)_{\leq
x},x)$, it suffices to show that for any $(I,x)\in S$, $\cl(I,x) = (\at(P)_{\leq
x},x)$. If $(I,x)\leq (J,y)$, then $x\neq y$ implies $x<y$ and hence
$\rk(I,x) = \rank_P(x) < \rank_P(y) = \rk(J,y)$.
So $(I,x)\leq (J,y)$ and $\rk(I,x) = \rk(J,y)$ implies $x=y$ and $I\subseteq J$.
This means that $\cl(I,x) = (A,x)$ where $A$ is maximal in $\boo(\at(P))$ such
that $x\in\bigvee A$, which is precisely the set $\at(P)_{\leq x}$.

Therefore, $(S,\rk)$ is a matroid scheme whose poset of flats is isomorphic to $P$.
It is easy to see from construction that this matroid scheme is simple.
It remains to show that this is the only such simple matroid scheme, up to
isomorphism.

Suppose that $\M=(S,\rk)$ and $\M'=(S',\rk')$ are simple matroid schemes and
$\phi:\F(\M)\to\F(\M')$ is an isomorphism between their posets of flats.
Since $\M$ and $\M'$ are both simple, $\phi$ restricts to a bijection on atoms
$\at(S)\to \at(S')$. We will extend $\phi$ to a map $\psi:S\to S'$ as follows.
Let $x\in S$. Then for any $a\in \at(S_{\leq x})$, we have $a=\cl(a)\leq_S \cl(x)$
so $\phi(a)\leq_{S'} \phi(\cl(x))$. This means that there is a unique
$\psi(x)\in\bigvee_{S'}\{\phi(a)\st a\in\at(S_{\leq x})\}$ such that
$\psi(x)\leq_{S'}
\phi(\cl(x))$. 

We claim that this $\psi:S\to S'$ defines an isomorphism of
posets with $\rk(x)=\rk'(\psi(x))$.
If $x\leq_S y$, then $\cl(x)\leq_{S} \cl(y)$ so $\psi(x)\leq_{S'} \phi(\cl(y))$.
Since $\at(S_{\leq x})\subseteq \at(S_{\leq y})$ and $S'_{\leq \phi(\cl(y))}$ is
a lattice, we have $\psi(x)\leq_{S'}\psi(y)$. Thus, $\psi$ is order-preserving.
Moreover, one can use $\phi^{-1}$ to analogously define an order-preserving
inverse of $\psi$.
$\cl'(\psi(x))=\phi(\cl(x))$, hence
$\rk(x)=\rk(\cl(x))=\rk'(\phi(\cl(x)))=\rk'(\psi(x))$.
\end{proof}

\section{Tutte polynomial}
The Tutte polynomial, originally defined for a graph and later extended to
matroids, is well-known for having a deletion-contraction recurrence. 
In our setting, since contraction does not preserve \eqref{x5} in
general, but does preserve purity, we have to extend the notion of Tutte
polynomial to pure matroid preschemes.

\begin{definition}\label{def:tutte}
Given a pure matroid prescheme $\M=(\S,\rk)$, define the Tutte polynomial as
\[T_{\M}(\x,\y) = \sum_{w\in\S} 
(\x-1)^{\rk(\M)-\rk(w)}(\y-1)^{|w|-\rk(w)}.\]
\end{definition}
If $\M$ is a matroid or a semimatroid, this coincides with the usual
definition of the Tutte polynomial.

\begin{theorem}\label{thm:tutte}
Let $\M=(S,\rk)$ be a matroid scheme and $a\in \at(S)$ an atom. 
\begin{enumerate}
\item If $a$ is not a loop nor an isthmus, then 
\[T_{\M}(\x,\y) = T_{\M-a}(\x,\y) + T_{\M/a}(\x,\y).\]
\item If $a$ is a loop, then \[T_{\M}(\x,\y) = \y T_{\M/a}(\x,\y).\]
\item If $a$ is an isthmus, then 
\[T_{\M}(\x,\y) = (\x-1)T_{\M-a}(\x,\y) + T_{\M/a}(\x,\y).\]
\end{enumerate}
\end{theorem}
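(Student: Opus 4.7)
The plan is to split the defining sum for $T_\M$ via the partition $S = S_{\geq a} \sqcup S_{\not\geq a}$, writing
\[T_\M(\x,\y) = T^{\geq a}(\x,\y) + T^{\not\geq a}(\x,\y),\]
and then identify each piece with an appropriate rescaling of $T_{\M_{/a}}$ or $T_{\M-a}$. The three cases of the theorem will follow by assembling these identifications.

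For the piece over $S_{\not\geq a}$, the rank function of $\M-a$ agrees with $\rk$, so the summand for $w$ differs from the corresponding summand in $T_{\M-a}$ only in the exponent of $\x-1$. By \Cref{prop:del}, that shift depends only on whether $a$ is an isthmus: I would obtain $T^{\not\geq a} = (\x-1)T_{\M-a}$ when $a$ is an isthmus, and $T^{\not\geq a} = T_{\M-a}$ otherwise.

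For the piece over $S_{\geq a}$, \Cref{prop:contr} gives $\rk(\M_{/a}) - \rk_{/a}(w) = \rk(\M) - \rk(w)$, so the exponent of $\x-1$ already matches that in $T_{\M_{/a}}$. The one subtlety is the $\y-1$ exponent: viewing $S_{\geq a}$ as a simplicial poset with minimum $a$, the interval $[a,w]$ is the upper set of $a$ in the Boolean lattice $S_{\leq w}$, so $|w|_{S_{\geq a}} = |w|_S - 1$ for every $w\geq a$. A short arithmetic then shows
\[|w|_S - \rk(w) = \bigl(|w|_{S_{\geq a}} - \rk_{/a}(w)\bigr) + \bigl(1 - \rk(a)\bigr),\]
so $T^{\geq a} = T_{\M_{/a}}$ when $\rk(a) = 1$ (i.e., $a$ is not a loop) and $T^{\geq a} = (\y-1)T_{\M_{/a}}$ when $\rk(a) = 0$ (i.e., $a$ is a loop).

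Assembling these identifications immediately yields parts (1) and (3). For part (2), the identifications first produce $T_\M = (\y-1)T_{\M_{/a}} + T_{\M-a}$, and the isomorphism $\M-a \cong \M_{/a}$ from \Cref{prop:del=contr} collapses this to $\y T_{\M_{/a}}$. The argument is routine; the main bookkeeping hurdle is tracking $|w|$ in $S$ versus in $S_{\geq a}$ and the shifts in $\rk(\M-a)$ and $\rk(\M_{/a})$ across the three cases.
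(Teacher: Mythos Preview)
Your proposal is correct and follows essentially the same route as the paper: split the defining sum over $S_{\not\geq a}$ and $S_{\geq a}$, identify the two pieces as $(\x-1)^{\rk(\M)-\rk(\M-a)}T_{\M-a}$ and $(\y-1)^{1-\rk(a)}T_{\M_{/a}}$ using \Cref{prop:del,prop:contr}, and then invoke \Cref{prop:del=contr} in the loop case. The paper packages the two shifts into a single uniform formula before specializing, but the content is identical.
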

\begin{proof}
To start, we have
\begin{align*}
T_{\M}(\x,\y) 
&= \sum_{w\in S} (\x-1)^{\rk(\M)-\rk(w)}(\y-1)^{|w|-\rk(w)}\\
&= \sum_{w\in S_{\not\geq a}} (\x-1)^{\rk(\M)-\rk(w)}(\y-1)^{|w|-\rk(w)}
+ \sum_{w\in S_{\geq a}} (\x-1)^{\rk(\M)-\rk(w)}(\y-1)^{|w|-\rk(w)}\\
&= (\x-1)^{\rk(\M)-\rk(\M-a)} T_{\M-a}(\x,\y)
+ (\y-1)^{|a|-\rk(a)} T_{\M/a}(\x,\y),
\end{align*}
Recall from \Cref{prop:del} that $\rk(\M-a)=\rk(\M)$ if $a$ is not an isthmus,
and $\rk(\M-a)=\rk(\M)-1$ if $a$ is an isthmus. 
We use this to simplify the above expression in each of our three cases.

If $a$ is neither a loop nor an isthmus, then $\rk(\M)-\rk(\M-a)=0$
and $|a|-\rk(a)=0$, so the expression simplifies to
$T_{\M-a}(\x,\y)+T_{\M/a}(\x,\y)$, as desired.

If $a$ is a loop, then $\rk(\M)-\rk(\M-a)=0$ and $|a|-\rk(a)=1$. Then the above
expression simplifies to $T_{\M-a}(\x,\y)+(\y-1) T_{\M/a}(\x,\y)$, and our
desired formula follows from the isomorphism $\M-a\cong \M/a$ from
\Cref{prop:del=contr}.

If $a$ is an isthmus, then $\rk(\M)-\rk(\M-a)=1$ and $|a|-\rk(a)=0$. Then the
above expression simplifies to $(\x-1)T_{\M-a}(\x,\y)+T_{\M/a}(\x,\y)$, as
desired.

\end{proof}

\begin{example}
Consider the matroid scheme $\M$ depicted in \Cref{fig:delcontr}, along
with its
deletion and contraction by an isthmus $a$.
The recurrence of \Cref{thm:tutte} yields
\begin{align*}
T_{\M}(\x,\y) &= (\x-1)T_{\M-a}(\x,\y) + T_{\M/a}(\x,\y)\\
&=(\x-1)(\x^2+1) + (\x^2+2\x-1)\\
&=\x^3+3\x-2
\end{align*}
We  see here that, unlike in the classical setting, the coefficients of
our Tutte polynomial need not be nonnegative. Nevertheless, the Tutte
polynomial does encode some invariants, which we explore next.
\end{example}

\begin{proposition}
Let $\M=(S,\rk)$ be a pure matroid prescheme. Then $T_{\M}(1,1)=|B(\M)|$
and $T_{\M}(2,2)=|S|$.
\end{proposition}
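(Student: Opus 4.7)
The plan is to substitute $(\x,\y)=(2,2)$ and $(\x,\y)=(1,1)$ directly into \Cref{def:tutte} and read off each sum, using the convention $0^0=1$ and $0^n=0$ for $n>0$. Note that by \eqref{x1} and \Cref{prop:rk}\eqref{item:equirkmax} (together with the existence, by finiteness of $S$, of a maximal element above every $w\in S$), both exponents $\rk(\M)-\rk(w)$ and $|w|-\rk(w)$ are nonnegative integers for every $w$, so the sums are well-defined in this sense.

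For $T_{\M}(2,2)$, each factor $(\x-1)$ and $(\y-1)$ becomes $1$, so every summand equals $1$ and the sum collapses to $|S|$.

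For $T_{\M}(1,1)$, every factor $(\x-1)$ and $(\y-1)$ becomes $0$, so by the convention above the summand indexed by $w$ contributes $1$ when $\rk(w)=\rk(\M)$ and $|w|=\rk(w)$, and contributes $0$ otherwise. Hence $T_{\M}(1,1)$ equals the cardinality of
\[W:=\{w\in S : \rk(w)=\rk(\M) \text{ and } w\in I(\M)\}.\]
The main (and only real) step is to identify $W$ with $B(\M)$. The inclusion $B(\M)\subseteq W$ is immediate from \Cref{cor:basisrank}. For the reverse inclusion, let $w\in W$ and suppose $w\leq w'$ with $w'\in I(\M)$; picking any maximal element $m\geq w'$, \eqref{x2} and \Cref{prop:rk}\eqref{item:equirkmax} give $|w'|=\rk(w')\leq\rk(m)=\rk(\M)=\rk(w)=|w|$, so $|w'|=|w|$ and thus $w=w'$. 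Hence $w\in\max I(\M)=B(\M)$, completing the identification and the proof.

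The whole argument is essentially formal; the only point requiring attention is the elementary verification that an independent element of rank $\rk(\M)$ is automatically maximal among independents, which is where \Cref{cor:basisrank} and \Cref{prop:rk}\eqref{item:equirkmax} do the work.
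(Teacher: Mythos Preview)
Your proof is correct and follows essentially the same approach as the paper: direct substitution into \Cref{def:tutte} and identification of the surviving summands. The only difference is that you spell out in detail why $\{w\in S:\rk(w)=\rk(\M)=|w|\}$ coincides with $B(\M)$, whereas the paper asserts this in a single sentence.
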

\begin{proof}
Setting $\x=\y=1$, the only summands in the definition of $T_{\M}(\x,\y)$
are indexed by $w\in S$ with $\rk(\M)=\rk(w)=|w|$. These elements are precisely
the bases of $\M$, and the corresponding summand is 1.
Setting $\x=\y=2$, all summands in the definition of $T_{\M}(\x,\y)$ are
equal to 1.
\end{proof}

\section{Characteristic polynomial}
Recall for a poset $P$ bounded below by $\zero$, we can recursively
define a M\"obius function
$\mu_P:P\to\Z$ so that $\mu_P(\zero)=1$ and $\sum_{u\leq
w}\mu_P(u)=0$ for any $w\in P_{>\zero}$. 
Then when $(P,\rk)$ is ranked and pure of rank $\rk(P)$, we define the
characteristic polynomial by
\[\chi_P(t) := \sum_{w\in P} \mu_P(w) t^{\rk(P)-\rk(w)}.\]
There is a well-known relationship between the Tutte polynomial of a matroid and
the characteristic polynomial of its poset of flats, which we extend to matroid
schemes in \Cref{thm:charpoly}.

\begin{lemma}\label{lem:moebius}
Let $\M=(S,\rk)$ be a matroid prescheme with no loops, and let $\F=\F(\M)$ be its poset
of flats. Then for any $w\in\F$,
\[\mu_{\F}(w) = \sum_{\substack{u\in S\st\\\cl(u)=w}} (-1)^{|u|}.\]
\end{lemma}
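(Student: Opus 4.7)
The plan is to verify that the right-hand side satisfies the defining recurrence for $\mu_{\F}$ and invoke uniqueness. Define $f\colon \F\to\Z$ by $f(w) := \sum_{u\in S,\ \cl(u)=w}(-1)^{|u|}$. I will show that for every $w\in\F$,
\[\sum_{v\in\F_{\leq w}} f(v) = \begin{cases} 1 & \text{if } w=\zero, \\ 0 & \text{otherwise.}\end{cases}\]
Together with the verification that $\zero$ is in fact the minimum of $\F$, this forces $f=\mu_{\F}$ by the recursive characterization of the M\"obius function.

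For the main computation, since $\cl(u)\in\F$ for every $u\in S$ by \eqref{cl3}, swapping the order of summation gives
\[\sum_{v\in\F_{\leq w}} f(v) \;=\; \sum_{\substack{u\in S \\ \cl(u)\leq w}} (-1)^{|u|}.\]
I claim the indexing set equals $S_{\leq w}$: if $u\leq w$ then $\cl(u)\leq \cl(w)=w$ by \eqref{cl2} and the hypothesis that $w$ is a flat, while conversely $\cl(u)\leq w$ forces $u\leq \cl(u)\leq w$ by \eqref{cl1}. Because $S$ is simplicial, $S_{\leq w}$ is the Boolean lattice on its $|w|$ atoms, so the sum collapses to $(1-1)^{|w|}$, which is $0$ when $|w|>0$ and $1$ when $|w|=0$, i.e.\ when $w=\zero$.

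It remains to check that $\zero\in\F$, i.e.\ that $\cl(\zero)=\zero$; this is precisely where the no-loops assumption enters. For any $y>\zero$ in $S$ there is an atom $a\leq y$, and since $a$ is not a loop, \Cref{prop:loop} gives $\rk(a)=1$, whence $\rk(y)\geq\rk(a)=1$ by \eqref{x2}. Thus $\zero$ is the unique element of $S$ above $\zero$ with rank $0$, so $\cl(\zero)=\zero$, and $\zero$ is the minimum of $\F$.

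The whole argument is essentially the standard Whitney/M\"obius computation for matroids adapted to our poset setting; the only nontrivial input is the equivalence $\cl(u)\leq w \iff u\leq w$ for $w$ a flat, which lets the double sum telescope into a single sum over a Boolean lattice. The one place one must be careful is the base case: without the no-loops hypothesis, the minimum of $\F$ would be $\cl(\zero)$ rather than $\zero$, and the alternating sum over $S_{\leq \cl(\zero)}$ would no longer produce the correct indicator, so the statement as written genuinely requires the no-loops assumption.
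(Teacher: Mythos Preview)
Your proof is correct and follows essentially the same approach as the paper's: both reduce to the Boolean-lattice identity $\sum_{u\in S_{\leq w}}(-1)^{|u|}=[w=\zero]$, the paper via an explicit induction on the M\"obius recursion and you via the equivalent ``verify the defining recurrence and invoke uniqueness'' formulation. Your justification of the equivalence $\cl(u)\leq w\iff u\leq w$ for $w$ a flat (using \eqref{cl1}--\eqref{cl2}) makes explicit a step the paper leaves implicit, but otherwise the two arguments are the same.
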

\begin{proof}
Since $\M$ has no loops, the minimum element of $\F$ is $\cl(\zero)=\zero$ and
$\mu_{\F}(\zero) = 1 = (-1)^{|\zero|}$.
Now let $w\in\F$ with $w>\zero$. Because $S_{\leq w}$ is a Boolean lattice of
positive rank, 
\[\sum_{u\in S_{\leq w}} (-1)^{|u|} = 0.\]
Using this, we have by induction:
\[\mu_{\F}(w) = - \sum_{\substack{v\in\F\st\\v<w}} \mu_{\F}(v)
= - \sum_{\substack{v\in\F\st\\v<w}} \sum_{\substack{u\in S\st\\\cl(u)=v}}
(-1)^{|u|}
= - \sum_{\substack{u\in S\st\\ \cl(u)<w}} (-1)^{|u|} 
= \sum_{\substack{u\in S\st\\\cl(u)=w}} (-1)^{|u|}\]
\end{proof}

\begin{theorem}\label{thm:charpoly}
If $\M=(S,\rk)$ is a pure matroid prescheme with no loops, then the
characteristic 
polynomial of its poset of flats $\F=\F(\M)$ is
\[\chi_{\F}(t) = (-1)^{\rk(\M)} T_{\M}(1-t,0).\]
\end{theorem}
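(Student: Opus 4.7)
The plan is to expand $(-1)^{\rk(\M)} T_{\M}(1-t,0)$ directly from the definition and regroup the sum by closure, then apply \Cref{lem:moebius}. Substituting $\x=1-t$ and $\y=0$ turns the factor $(\x-1)^{\rk(\M)-\rk(w)}$ into $(-t)^{\rk(\M)-\rk(w)}$ and the factor $(\y-1)^{|w|-\rk(w)}$ into $(-1)^{|w|-\rk(w)}$. Combining these with the overall sign $(-1)^{\rk(\M)}$, the powers of $-1$ collect (since $2\rk(\M)-2\rk(w)$ is even) to leave $(-1)^{|w|}$, giving
\[(-1)^{\rk(\M)} T_{\M}(1-t,0) = \sum_{w\in S} (-1)^{|w|}\, t^{\rk(\M)-\rk(w)}.\]

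Next, I would partition $S$ by the fibers of $\cl:S\to F(\M)$. For each flat $v\in\F$, every $w$ with $\cl(w)=v$ has $\rk(w)=\rk(v)$, so the exponent of $t$ is constant on the fiber and I can factor $t^{\rk(\M)-\rk(v)}$ out of the inner sum. The remaining sum $\sum_{\cl(w)=v}(-1)^{|w|}$ is exactly $\mu_{\F}(v)$ by \Cref{lem:moebius} (which uses the no-loops hypothesis).

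Finally, I would observe that $\rk(\F)=\rk(\M)$: any maximal element $m$ of $S$ satisfies $\cl(m)=m$ (since $\cl(m)\in S_{\geq m}$ forces equality by maximality), so $m\in\F$, and \Cref{prop:rk}\eqref{item:equirkmax} together with \Cref{cor:basisrank} give $\rk(m)=\rk(\M)$. This allows the final expression
\[(-1)^{\rk(\M)} T_{\M}(1-t,0) = \sum_{v\in\F} \mu_{\F}(v)\, t^{\rk(\M)-\rk(v)} = \chi_{\F}(t)\]
to match the definition of $\chi_{\F}(t)$ on the nose.

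There is essentially no obstacle here beyond bookkeeping: all the real content lives in \Cref{lem:moebius}, whose proof uses the no-loops hypothesis to guarantee $\cl(\zero)=\zero$ and uses that each $S_{\leq w}$ is Boolean of positive rank (so its alternating sum of $(-1)^{|u|}$ vanishes) to run a Möbius induction. Once that lemma is in hand, the only care needed is tracking the parities in the sign computation.
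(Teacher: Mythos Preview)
Your proposal is correct and follows essentially the same route as the paper: substitute, collect signs to obtain $\sum_{w\in S}(-1)^{|w|}t^{\rk(\M)-\rk(w)}$, partition by fibers of $\cl$, and invoke \Cref{lem:moebius}. Your extra justification that $\rk(\F)=\rk(\M)$ is a detail the paper leaves implicit, but otherwise the arguments are identical.
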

\begin{proof}
\begin{align*}
(-1)^{\rk(\M)} T_{\M}(1-t,0)
&= (-1)^{\rk(\M)} \sum_{u\in S} (-t)^{\rk(\M)-\rk(u)}(-1)^{|u|-\rk(u)}\\
&= \sum_{u\in S} (-1)^{|u|} t^{\rk(\M)-\rk(u)}\\
&= \sum_{w\in \F}\sum_{\substack{u\in S\st\\\cl(u)=w}} (-1)^{|u|}
t^{\rk(\M)-\rk(u)}\\
&= \sum_{w\in \F} \mu_{\F}(w)t^{\rk(\M)-\rk(w)}=\chi_F(t) &&\text{by Lemma
\ref{lem:moebius}}
\end{align*}
\end{proof}

\section{Semimatroids with group actions}
\Cref{def:semi} of a semimatroid
 extends directly to allow
simplicial complexes which are finite-dimensional but with an infinite vertex
set. In this section, we will allow this generality and consider group actions
on a semimatroid following \cite{DR,dd}. 

An action of a group $G$ on
a semimatroid $\M=(\cx,\rk)$ is an action of $G$ on the vertex set $\at(\cx)$
whose induced action on $\boo(\at(\cx))$ preserves rank and centrality. That is,
for any $g\in G$ and $A\subseteq\at(\cx)$, one has $A\in\cx$ if and only if
$gA\in\cx$, and in this case $\rk(A)=\rk(gA)$.
We say that the action of $G$ is \textbf{cofinite} if there are finitely many
$G$-orbits, and \textbf{translative} if for any $g\in G$ and $a\in\at(\cx)$,
$\{a,ga\}\in\cx$ implies $ga=a$.

Given a group $G$ acting on a ranked poset $P$, the quotient $P/G$ is defined as
the set of orbits $\{Gx \st x\in P\}$ with partial order $Gx\leq Gy$ if $gx\leq
y$ for some $g\in G$. Since $P$ is ranked, the quotient $P/G$ will indeed be a
poset (see \cite[Lemma 2.6]{dd}). Moreover, if $P$ is a simplicial poset then
$P/G$ will be a simplicial poset if and only if the action of $G$ is translative
(see \cite[Lemma 3.5]{dd}).

Consider a cofinite translative action of a group $G$ on a finite-dimensional
simplicial complex $\cx$.
For $A\subseteq \at(\cx)/G$, define $m_G(A)$ to be the number of $G$-orbits
under the action of $G$ on the set of all $\{a_1,\dots,a_k\}\in\cx$ for which
$\{Ga_1,\dots,Ga_k\}=A$.
If moreover $\rk:\cx\to\Z_{\geq0}$ is a $G$-invariant function, \cite[Def.
3.28]{DR} define an analogue of the Tutte polynomial for the $G$-semimatroid
$\M=(\cx,\rk)$ to be
\[T_{G\actson\M}(\x,\y) := \sum_{A\subseteq\at(\cx)/G}
m_G(A)(\x-1)^{\rk(\M)-\rk(A)}(\y-1)^{|A|-\rk(A)}.\]

\begin{theorem}\label{thm:Gsemi}
Let $\M=(\cx,\rk)$ be a semimatroid with a translative and cofinite action of a
group $G$, and 
define $\rk_G:\cx/G\to\Z_{\geq 0}$ by $\rk_G(Gx)=\rk(x)$. 
Then $\M/G=(\cx/G,\rk_G)$ is a matroid scheme with 
\[ F(\M/G)=F(\M); \quad I(\M/G) = I(\M)/G; \quad C(\M/\G) = C(\M)/\G; \quad 
\text{and} \quad 
T_{\M/G}(\x,\y) = T_{G\actson \M}(\x,\y).\]
\end{theorem}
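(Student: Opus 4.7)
The plan is to verify the matroid scheme axioms \eqref{x1}--\eqref{x5} for $\M/G$, then identify the posets of flats, independent elements, and circuits, and finally match the two Tutte polynomials by reindexing the defining sum.

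First I would recall that translativity of the action guarantees $\cx/G$ is a simplicial poset (citing \cite[Lemma 3.5]{dd}), and that $|Gx| = |x|$ because the atoms below $x$ lie in distinct orbits. Since rank is $G$-invariant, $\rk_G$ is well-defined. The axioms \eqref{x1} and \eqref{x2} then follow immediately by lifting to a representative in $\cx$ and applying \eqref{s1} and \eqref{s2}. The substantive work lies in \eqref{x3}--\eqref{x5}, each of which requires lifting a meet or join relation in $\cx/G$ to an actual meet or join in $\cx$. Concretely, given $Gu \in Gx \vee Gy$ in $\cx/G$, one may use the relations $Gx \leq Gu$ and $Gy \leq Gu$ to translate $x$ and $y$ by elements of $G$ so that $x,y \leq u$ in $\cx$; then $x \cup y \in \cx$, and an application of \eqref{s3} to $(x,y)$ descends to \eqref{x3} after noting that $\rk_G(Gx \wedge Gy) = \rk(x \wedge y)$ for a suitably chosen meet. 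Analogous lifting arguments reduce \eqref{x4} and \eqref{x5} to \eqref{s4} and \eqref{s5}.

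For the structural identities, I would exploit that independence, being dependent, and being a circuit are all characterized by rank and cardinality, both of which are $G$-invariant. Hence $Gx \in I(\M/G)$ iff any (equivalently every) representative $x$ satisfies $\rk(x) = |x|$, giving $I(\M/G) = I(\M)/G$; the identical argument gives $C(\M/G) = C(\M)/G$. For the poset of flats, I would show that the closure operator on $\cx/G$ lifts to the usual closure in the semimatroid: $Gx$ is maximal in $\rk_G^{-1}(r)$ above itself iff any representative $x$ is maximal in $\rk^{-1}(r)$ above itself, because both the partial order and rank descend compatibly. The stated equality then amounts to identifying the orbits of flats in $\cx$ with the flats of $\cx/G$.

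Finally, for the Tutte polynomial identity, I would partition the sum $\sum_{Gw \in \cx/G}$ in \Cref{def:tutte} according to the subset $A = \{Ga : a \in \at(\cx), a \leq w\} \subseteq \at(\cx)/G$ of atom-orbits beneath $Gw$. For a fixed $A$, the elements $Gw$ with this atom set correspond bijectively to $G$-orbits of simplices $\{a_1,\dots,a_k\} \in \cx$ with $\{Ga_1,\dots,Ga_k\} = A$, and there are exactly $m_G(A)$ such orbits by definition. Since all such $Gw$ have $\rk_G(Gw) = \rk(A)$ and $|Gw| = |A|$, the summands match termwise with those in $T_{G \actson \M}$.

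The principal obstacle is the lifting step underlying \eqref{x3}--\eqref{x5}: a join relation $Gu \in Gx \vee Gy$ in the quotient admits many triples of representatives in $\cx$, and one must invoke the preservation of centrality by the $G$-action (together with the rankedness of $\cx/G$) to select representatives that realize the meet or join inside the semimatroid itself. Once this lifting is in place, each axiom reduces mechanically to its semimatroid counterpart, and the rest of the theorem is a matter of unpacking definitions.
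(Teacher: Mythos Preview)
Your approach is essentially identical to the paper's: verify \eqref{x1}--\eqref{x5} by lifting to representatives in $\cx$, deduce the identifications of flats, independents, and circuits from $G$-invariance of rank and cardinality, and match the Tutte polynomials by partitioning $\cx/G$ according to the set of atom-orbits $A\subseteq\at(\cx)/G$ beneath each element.

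One step deserves more care than the phrase ``analogous lifting'' suggests. In \eqref{x5}, after \eqref{s5} produces $a\in\at(\cx)$ with $a\leq y$, $a\not\leq x$, and $a\vee x\neq\emptyset$, the conclusion $Ga\not\leq Gx$ does not follow formally from $a\not\leq x$: if $ha\leq x$ for some $h\in G$, then any $u\in a\vee x$ is a common upper bound of $a$ and $ha$, whence $\{a,ha\}\in\cx$ and translativity forces $a=ha\leq x$, a contradiction. This is the one place beyond the simplicial-poset structure where translativity is genuinely invoked, and the paper singles it out; your sketch has all the ingredients but should make this descent step explicit rather than folding it into ``lifting''.
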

\begin{proof}
Axioms \eqref{x1}--\eqref{x4} only require that $\rk$ is $G$-invariant;
the translativity hypothesis is required to see that $\cx/G$ is a simplicial
poset (which follows from \cite[Lemma 3.5]{dd}) and to establish \eqref{x5}.
\begin{description}
\item[\eqref{x1}]
For any $x\in\cx$, 
if
$\rk(x)\leq \rank_{\cx}(x)$.
then
$\rkG(\G x)\leq \rank_{\cx/\G}(\G x)$.
\item[\eqref{x2}]
If $Gx\leq Gy$, then $x\leq gy$ for some $g\in G$, thus
$\rkG(\G x) = \rk(x) \leq \rk(gy) = \rkG(\G y)$.
\item[\eqref{x3}]
If $\G u\in\G x\vee\G y$, then there exist $g,h\in\G$ such that $u\in gx\vee
hy$, and hence
\begin{align*}
\rk(u)+\rk((gx)\wedge(hy))
&=\rkG(\G u) + \rkG(\G x\wedge\G y)\\
&\leq \rkG(\G x) + \rkG(\G y)
= \rk(gx)+\rk(hy)
\end{align*}

\item[\eqref{x4}]
Suppose that $G\ell\in(Gx)\wedge(Gy)$ and $\rk_G(G\ell)=\rk_G(Gx)$. Then there
exist $g,h\in G$ such that $\ell\in(gx)\wedge(hy)$ and $\rk(\ell)=\rk(gx)$. This
implies $(gx)\vee(hy)\neq\emptyset$ and hence $(Gx)\vee(Gy)\neq\emptyset$.
\item[\eqref{x5}]
If $\rkG(\G x)<\rkG(\G y)$, then $\rk(x)<\rk(y)$. So there is some $a\in A(\cx)$
such that $a\leq y$, $a\not\leq x$, and $a\vee x\neq\emptyset$. It follows that
$\G a\in A(\cx/\G)$, $\G a\leq\G y$, and $\G a\vee\G x\neq\emptyset$.
We must also have $\G a\not\leq\G x$, since otherwise
there would be some $h\in\G$ with $ha\leq x$,
contradicting translativity since this would make 
any $z\in a\vee x$ a common upper bound of
$a$ and $ha$. 
\end{description}
Further note that the closure commutes with the $\G$-action, implying that
$F(\M)/G= F(\M/\G)$. The statements about independence and circuits follow
from the assumption that $\rk$ is $G$-invariant.

For the Tutte polynomial, the key is to observe that $\boo(\at(\cx)/G)$ indexes
a partition of $\cx/G$ with blocks given by the sets $\bigvee A$ for
$A\subseteq\at(\cx)/G$ (allowing for some blocks to be empty). Then
$m_G(A)=|\bigvee A|$ and $\rk(A)=\rk_G(w)$ for any $w\in\bigvee A$. Thus
\[\sum_{w\in
\cx/G}(\x-1)^{\rk_G(\M/G)-\rk_G(w)}(\y-1)^{|w|-\rk_G(w)}
= 
\sum_{A\subseteq \at(\cx)/G} m_G(A)(\x-1)^{\rk(\M)-\rk(A)}(\y-1)^{|A|-\rk(A)}\]
\end{proof}

\begin{example}\label{ex:nonpos}
The matroid scheme $\M=(S,\rk)$ depicted in
\Cref{fig:M}, with $\rk(w)=|w|$ for all $w\in S$,
is not isomorphic to $\widetilde{\M}/G$ for any translative $G$-action on a
semimatroid $\widetilde{\M}=(\widetilde{S},\widetilde{\rk})$.
If it were, then letting $u,v\in\max\widetilde{S}$ so that $Gu\neq Gv$, repeatedly
applying the basis exchange axiom \eqref{b3'} in $\widetilde{\M}$ would yield a
path $Gu=Gw_0>Gz_1<Gw_1>Gz_2<Gw_2>Gz_3<Gw_3=Gv$ in $S$, where $\rk(Gz_i)=2$ and
$\rk(Gw_i)=3$ for each $i$, which does not exist.
Alternatively, its Tutte polynomial 
\[T_{\M}(\x,\y) = 2+6(\x-1)+3(\x-1)^2+(\x-1)^3 = \x^3+3\x-2.\]
contains a negative coefficient, which never happens for
quotients of semimatroids (\cite[Thm. H]{DR}).

\end{example}

\section{Abelian arrangements}\label{sec:arr}
The topological motivation for matroid schemes and geometric posets is through
certain arrangements of submanifolds, which we make explicit here.
An example is given in \Cref{ex:toric} (see also \Cref{fig:toric}).
Throughout this section, 
fix a finite-dimensional connected abelian Lie group $\GG$ and a finite-rank
free abelian group $\Gamma$.

\begin{definition}\label{def:arr}
An abelian Lie group arrangement, or an \textbf{abelian arrangement} for short,
is a collection $\A=\{H_{\alpha,t}\st(\alpha,t)\in\X\}$ for some finite set
$\X\subseteq\Gamma\times\GG$, where 
\[H_{\alpha,t} := \{\varphi\in\Hom(\Gamma,\GG)\st \varphi(\alpha)=t\}.\]
We may and will assume that the collection $\X$ is chosen so that each
$H_{\alpha,t}$ is connected.

\end{definition}

\begin{remark}
When $\GG=\R^d$ and $\Gamma=\Z^n$, this is an arrangement of affine linear subspaces in
$\Hom(\Z^n,\R^d)\cong\R^{nd}$, and in particular an arrangement of real (or
complex) affine hyperplanes when $d=1$ ($d=2$, resp.).

When $\GG=S^1$ or $\GG=\C^\times$ and $\Gamma=\Z^n$, we can identify
$\Hom(\Z^n,\GG)\cong\GG^n$ with a real or complex torus, and thus $\A$ describes
a collection of translated subtori.

When $\GG$ is a complex elliptic curve, such arrangements are called elliptic
arrangements.
\end{remark}

\begin{definition}\label{def:layers}
A \textbf{layer} of an arrangement $\A$ is a connected component of an
intersection $\cap_{H\in \B} H$ for some subset
$\B\subseteq\A$. The \textbf{poset of layers} is the set $P(\A)$ whose elements
are the layers of $\A$ partially ordered by reverse inclusion.
\end{definition}

\begin{remark}
By convention, $\Hom(\Gamma,\GG)$ is the unique minimal element of $P(\A)$, and
the atoms of $P(\A)$ may be identified with the elements of $\A$. 
\end{remark}

\begin{definition}
Let $\A$ be an abelian arrangement and $H_0\in\A$. The \textbf{deletion}
of $H_0$ is the abelian arrangement $\A\setminus\{H_0\}$, and
the \textbf{restriction} to $H_0$ is the abelian arrangement $\A^{H_0}$ given by
the nonempty connected components of the intersections 
$H_0\cap H$ where $H\in\A\setminus\{H_0\}$.
\end{definition}

\begin{remark}
It may not be obvious that the restriction is an abelian arrangement by our
\Cref{def:arr}, but it follows from the assumption that $H_0=H_{\alpha,t}$ is
connected. This allows us to fix a direct sum
decomposition $\Gamma\cong(\Z\alpha)\oplus\Gamma_\alpha$ for some
$\Gamma_\alpha\subseteq\Gamma$. 
If $e$ is the identity of $\GG$, then we may identify $H_{\alpha,e}\cong
\Hom(\Gamma_\alpha,\GG)$.
Now, for $(\beta,u)\in\Gamma\times\GG$, we can identify $\beta$ with
$(c\alpha,\beta')\in(\Z\alpha)\oplus\Gamma_\alpha$, and hence the intersection
$H_{\alpha,e}\cap H_{\beta,u}$ may be identified with
$H_{\beta',u}\subseteq\Hom(\Gamma_\alpha,\GG)\cong H_{\alpha,e}$.
Moreover, $H_{\alpha,e}$ is a subgroup of $\Hom(\Gamma,\GG)$ and
$H_{\alpha,t}$ is a translation of this subgroup, explicitly
$H_{\alpha,t}=\phi_{\alpha,t}+H_{\alpha,e}$ where for
$(c\alpha,\beta)\in(\Z\alpha)\oplus\Gamma_\alpha\cong\Gamma$ we have
$\phi_{\alpha,t}(c\alpha,\beta)=ct$.
This translation allows us to view $H_{\alpha,t}$ as a group isomorphic to
$\Hom(\Gamma_\alpha,\GG)$ and intersections $H_{\alpha,t}\cap H_{\beta,u}$ as
translated subgroups of $\Hom(\Gamma_\alpha,\GG)$ as desired.

Note that since each layer $X\in P(\A)$ is connected, we may view it as a
translation of a subgroup of $\Hom(\Gamma,\GG)$ isomorphic to
$\Hom(\Gamma_X,\GG)$ for some 
$\Gamma_X\subseteq\Gamma$. Thus, we can extend the definition of restriction to
any layer, but we omit the details here for simplicity. The operation of
restricting to a layer can also be viewed as an iteration of the operation of
the defined restriction operation.
\end{remark}

\begin{definition}\label{def:localarr}
Let $\A$ be an abelian arrangement (defined by $\X\subseteq\Gamma\times\GG$) and
$X\in P(\A)$ a layer, and let $\X_X=\{\alpha\in\Gamma \st (\alpha,t)\in\X \text{
and } H_{\alpha,t}\supseteq X \text{ for some } t\in\GG\}.$ 
The \textbf{localization} of $\A$ at $X$ is the collection of linear
subspaces $H_{\alpha,0}\subseteq\Hom(\Gamma,\R^{\dim\G})$ with $\alpha\in\X_X$.
\end{definition}

\begin{remark}
Topologically, we can view the localization as an arrangement of linear
subspaces in the tangent space $T_\varphi\Hom(\Gamma,\GG)$ for a generic point
$\varphi\in X$. The linear subspaces are given by $T_\varphi H$ where
$H\in\at(P(\A)_{\leq X})$.
\end{remark}

\begin{example}\label{ex:toric}
Let $\GG=S^1$, $\Gamma=\Z^2$, and $\X=\{((1,1),1),((1,-1),1)\}$. 
Identifying $\Hom(\Z^2,S^1)\cong
S^1\times S^1$ and letting $H_0=H_{(1,-1),1}$, we depict the subtori in the
arrangements $\A$,  $\A^{H_0}$, and $\A\setminus\{H_0\}$, along with the linear
arrangement $\A_{(-1,-1)}$, in \Cref{fig:A,fig:A^H,fig:A-H,fig:A_x},
and the matroid scheme associated to each in
\Cref{fig:MA,fig:MA/H,fig:MA-H,fig:MA_x}.

\begin{figure}[ht]
\phantom{X}
\begin{subfigure}[t]{.2\textwidth}
\centering
\begin{tikzpicture}[scale=1.5]
\draw[-] (0,0)--(1,0)--(1,1)--(0,1)--(0,0);
\draw[ultra thick,-] (0,0)--(1,1);
\draw[ultra thick,-] (0,1)--(1,0);
\solidnodes
\node at (0,0) {};
\node at (.5,.5) {};
\end{tikzpicture}
\caption{$\A$}
\label{fig:A}
\end{subfigure}
\begin{subfigure}[t]{.2\textwidth}
\centering
\begin{tikzpicture}[scale=1.5]
\draw[dashed,-] (0,0)--(1,0)--(1,1)--(0,1)--(0,0);
\draw[ultra thick,-] (0,0)--(1,1);
\draw[dashed,-] (0,1)--(1,0);
\solidnodes
\node at (0,0) {};
\node at (.5,.5) {};
\end{tikzpicture}
\caption{$\A^{H_0}$}
\label{fig:A^H}
\end{subfigure}
\hfill
\begin{subfigure}[t]{.2\textwidth}
\centering
\begin{tikzpicture}[scale=1.5]
\draw[-] (0,0)--(1,0)--(1,1)--(0,1)--(0,0);
\draw[ultra thick,-] (0,1)--(1,0);
\end{tikzpicture}
\caption{$\A\setminus\{H_0\}$}
\label{fig:A-H}
\end{subfigure}
\hfill
\begin{subfigure}[t]{.2\textwidth}
\centering
\begin{tikzpicture}[scale=1.5]
\draw[ultra thick,<->] (0,0)--(1,1);
\draw[ultra thick,<->] (0,1)--(1,0);
\solidnodes
\node at (.5,.5) {};
\end{tikzpicture}
\caption{$\A_{(-1,-1)}$}
\label{fig:A_x}
\end{subfigure}
\phantom{XX}

\bigskip

\begin{subfigure}[t]{.15\textwidth}
\centering
\begin{tikzpicture}[scale=.8]
\node at (-1.3,1.2) {$a$};
\labelnodes
\node (a) at (-1,1.5) {$1$};
\node (b) at (1,1.5) {$1$};
\node (u) at (-1,3) {$2$};
\node (v) at (1,3) {$2$};
\node (0) at (0,0.3) {$0$};
\draw[-] (a)--(0)--(b)--(u)--(a)--(v)--(b);
\end{tikzpicture}
\caption{$\M(\A)$}
\label{fig:MA}
\end{subfigure}
\begin{subfigure}[t]{.25\textwidth}
\centering
\begin{tikzpicture}[scale=.8]
\labelnodes
\node (a) at (-1,1.5) {$0$};
\node (u) at (-1,3) {$1$};
\node (v) at (1,3) {$1$};
\opennodes
\node (b) at (1,1.5) {};
\node (0) at (0,0.3) {};
\draw[dashed,-] (a)--(0)--(b)--(u)--(a)--(v)--(b);
\draw[ultra thick,-] (u)--(a)--(v);
\end{tikzpicture}
\caption{$\M(\A)_{/H_0}\cong\M(\A^{H_0})$}
\label{fig:MA/H}
\end{subfigure}
\begin{subfigure}[t]{.3\textwidth}
\centering
\begin{tikzpicture}[scale=.8]
\labelnodes
\node (b) at (1,1.5) {$1$};
\node (0) at (0,0.3) {$0$};
\opennodes
\node (a) at (-1,1.5) {};
\node (u) at (-1,3) {};
\node (v) at (1,3) {};
\draw[dashed,-] (a)--(0)--(b)--(u)--(a)--(v)--(b);
\draw[ultra thick,-] (0)--(b);
\end{tikzpicture}
\caption{$\M(\A)-H_0\cong\M(\A\setminus\{H_0\})$}
\label{fig:MA-H}
\end{subfigure}
\begin{subfigure}[t]{.2\textwidth}
\centering
\begin{tikzpicture}[scale=.8]
\opennodes
\node (u) at (-1,3) {};
\labelnodes
\node (a) at (-1,1.5) {$1$};
\node (b) at (1,1.5) {$1$};
\node (v) at (1,3) {$2$};
\node (0) at (0,0.3) {$0$};
\draw[-,ultra thick] (b)--(v)--(a)--(0)--(b); 
\draw[-,dashed] (b)--(u)--(a);
\end{tikzpicture}
\caption{$\M(\A)_x\cong\M(\A_x)$}
\label{fig:MA_x}
\end{subfigure}

\caption{A toric arrangement $\A$ along with a restriction, deletion,
and localization, as well as the associated matroid schemes.}
\label{fig:toric}
\end{figure}
\end{example}

\begin{theorem}\label{thm:arr}
To any abelian arrangement $\A$, one may assign a matroid scheme
$\M(\A)$ so that there is an isomorphism of posets \[P(\A)\cong F(\M(\A))\] 
identifying the layers of $\A$ with the flats of $\M(\A)$. Moreover, 
for $H_0\in\A$ and $X\in P(\A)$
one has  the following isomorphisms of matroid schemes
\[
\M(\A\setminus\{H_0\})\cong\M(\A)-H_0; \quad
\M(\A^{H_0})\cong M(\A)_{/H_0}; \quad \text{and} \quad
\M(\A_{X})\cong \M(\A)_{X}.
\]
\end{theorem}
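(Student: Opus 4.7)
The plan is to construct $\M(\A)$ by first showing that the poset of layers $P(\A)$ is a geometric poset and then invoking \Cref{thm:GP=MS}. Concretely, the resulting simplicial poset may be described as
\[
S_\A = \{(I, X) \in \boo(\A) \times P(\A) \st X \in \bigvee\nolimits_{P(\A)} I\},
\]
with $\rk(I, X)$ equal to the codimension of the layer $X$ in $\Hom(\Gamma, \GG)$; its atoms correspond to the hyperplanes of $\A$, and each $X \in P(\A)$ corresponds to a flat $(I_X, X)$ with $I_X := \{H \in \A \st H \supseteq X\}$.

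For \eqref{g1}, any interval $P(\A)_{\leq X}$ is order-isomorphic to the lattice of flats of the central linear arrangement $\A_X$ in the tangent space at a generic point of $X$ (\Cref{def:localarr}), which is a geometric lattice. For \eqref{g2}, let $X \in P(\A)$ and $A \subseteq \at(P(\A))$ with $Y \in \bigvee A$ and $\rk(X) < \rk(Y) = |A|$. The strategy is to work inside the localization at $Y$: the hyperplanes in $A$ become $|A|$ linearly independent hyperplanes in the tangent space, and the rank deficit forces some $H \in A$ with $H \not\supseteq X$ and $H \cap X \neq \emptyset$, giving the required common upper bound in $P(\A)$. The main obstacle lies here: one must cleanly pass between global layer data and the local linear picture, verifying that the combinatorially chosen atom really meets $X$ as a submanifold of $\Hom(\Gamma, \GG)$, not merely in the tangent space.

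Once $P(\A)$ is known to be geometric, \Cref{thm:GP=MS} produces a unique simple matroid scheme $\M(\A)$ with $F(\M(\A)) \cong P(\A)$, proving the first assertion. The operation identifications then follow. Deletion is essentially tautological: under the atom--hyperplane identification, the subposet of $S_\A$ consisting of pairs $(I, X)$ with $H_0 \notin I$ is precisely $S_{\A \setminus \{H_0\}}$, with the same codimension-rank (\Cref{def:del}). For contraction, the restriction $\A^{H_0}$ has atoms corresponding to the connected components of $H_0 \cap H$ for $H \in \A \setminus \{H_0\}$; this yields an order isomorphism $(S_\A)_{\geq (\{H_0\}, H_0)} \to S_{\A^{H_0}}$, and subtracting $\rk(H_0) = 1$ from codimension in $\Hom(\Gamma,\GG)$ corresponds to measuring codimension inside $H_0$, matching \Cref{def:contr}. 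Finally, the localization $\M(\A)_X$ of \Cref{def:local} is the matroid on the Boolean interval $(S_\A)_{\leq (I_X, X)}$ with rank given by codimension, which by \Cref{def:localarr} coincides with $\M(\A_X)$.
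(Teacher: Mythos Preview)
Your approach to constructing $\M(\A)$ differs substantially from the paper's. Instead of verifying \eqref{g1}--\eqref{g2} for $P(\A)$ directly, the paper lifts $\A$ to a periodic affine arrangement in the universal cover of $\Hom(\Gamma,\GG)$, obtaining a semimatroid with a cofinite translative action of a free abelian group $G$ (as described in \cite{dd}), and then applies \Cref{thm:Gsemi} to produce the matroid scheme as the quotient. This neatly sidesteps your acknowledged ``main obstacle'': the semimatroid axioms are straightforward for an honest affine arrangement, and the delicate global-to-local passage is absorbed into the already-established \Cref{thm:Gsemi}. Your direct route to \eqref{g2} may well be workable, but as written it is a strategy rather than an argument, and the paper's detour through the cover is precisely the device that keeps the proof short. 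For the operation identities the paper also proceeds differently, comparing posets of flats rather than the underlying simplicial posets, and then invoking that each $\M(\B)$ so constructed is simple and hence determined by $F(\M(\B))$ via \Cref{thm:GP=MS}.

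There is, in addition, a genuine gap in your contraction argument. Because your $\M(\B)$ is always the \emph{simple} matroid scheme produced by \Cref{thm:GP=MS}, its atoms are exactly $\at(P(\B))$. But $\M(\A)_{/H_0}$ need not be simple: its atoms are the covers of $(\{H_0\},H_0)$ in $S_\A$, namely the pairs $(\{H_0,H\},X)$ with $X$ a component of $H_0\cap H$, and distinct $H,H'\in\A\setminus\{H_0\}$ may yield the same component $X\subseteq H_0$ (already for three concurrent lines in the plane). In that situation $(S_\A)_{\geq(\{H_0\},H_0)}$ has strictly more atoms than $P(\A^{H_0})$, so your claimed order isomorphism $(S_\A)_{\geq(\{H_0\},H_0)}\to S_{\A^{H_0}}$ cannot hold. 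To repair this you must either treat $\A^{H_0}$ as an indexed family (so that its matroid scheme carries the appropriate parallel atoms) or argue, as the paper does, only at the level of flats.
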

\begin{proof}
As described in \cite[Section 9]{dd},
one can lift an abelian arrangement $\A$ to a(n infinite) periodic
arrangement of affine linear subspaces in the universal cover
$\R^{(\rank\Gamma)(\dim\GG)}$ of $\Hom(\Gamma,\GG)\cong\GG^{\rank\Gamma}$, 
whose intersection data forms a semimatroid $(\cx,\rk)$ with a
cofinite and translative action of a free abelian group $G$. The quotient
$\M/G=(\cx/G,\rk_G)$ from \Cref{thm:Gsemi} is then the desired matroid scheme whose
flats are the layers of $\A$. 

The matroid scheme we associate to an arrangement in this way will always be simple,
hence determined by its poset of flats via \Cref{thm:GP=MS}. 
Then, using the just established correspondence between layers of an arrangement
and flats of its matroid scheme, we have:
\begin{align*}
F(\M(\A\setminus\{H_0\}))&\cong P(\A\setminus\{H_0\}) 
\\ &=  \{w\in P(\A)\st w\in\bigvee A\st
A\subseteq\A\setminus\{H_0\}\}\cong
F(\M(\A)-H_0)
\end{align*}
implying $\M(\A\setminus\{H_0\})\cong\M(\A)-H_0$;
\[ F(\M(\A^{H_0})) \cong P(\A^{H_0}) = P(\A)_{\geq H_0} \cong F(\M(\A)_{/H_0}) \]
implying $\M(\A^{H_0})\cong \M(\A)_{/H_0}$; and
\[ F(\M(\A_X)) \cong P(\A_X) \cong P(\A)_{\leq X} \cong F(\M(\A)_X)\]
implying $\M(\A_X)\cong\M(\A)_X$.
\end{proof}

\begin{corollary}[{\cite[Corollary 4.4.6]{BD}}]
The poset of layers of an abelian arrangement is a geometric poset.
\end{corollary}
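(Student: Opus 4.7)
The plan is to chain together the two main structural results already in hand. By \Cref{thm:arr}, the abelian arrangement $\A$ determines a matroid scheme $\M(\A)$ together with a poset isomorphism $P(\A)\cong F(\M(\A))$ identifying layers with flats. So the corollary reduces to showing that the right-hand side is a geometric poset.

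For that, I would invoke the forward direction of \Cref{thm:GP=MS}, which asserts precisely that the poset of flats of any matroid scheme is geometric. Applied to $\M(\A)$, this immediately yields that $F(\M(\A))$ is a geometric poset, and then transferring across the isomorphism from \Cref{thm:arr} gives that $P(\A)$ is geometric as well. Since geometricity is a property defined purely in terms of rank, covers, and joins in the poset, it is preserved under poset isomorphism, so this transfer is automatic.

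There is no real obstacle here: both ingredients were established earlier in the paper, and the corollary is a formal consequence. The only thing to be slightly careful about is that the isomorphism $P(\A)\cong F(\M(\A))$ from \Cref{thm:arr} is an isomorphism of posets, and the rank function on a geometric poset is intrinsic to the poset structure (since geometric posets are ranked), so the rank functions correspond under the isomorphism. With that observation the proof is essentially one line.
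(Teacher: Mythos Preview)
Your proposal is correct and matches the paper's intended argument: the corollary is stated immediately after \Cref{thm:arr} with no separate proof, precisely because it follows at once from combining $P(\A)\cong F(\M(\A))$ with the forward direction of \Cref{thm:GP=MS}. Your additional remark that geometricity transfers under poset isomorphism is the only thing one might want to make explicit, and you have done so.
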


\begin{remark}
If the abelian Lie group $\GG$ is noncompact, then the Poincar\'e polynomial for
the complement of an arrangement $\A$ is determined by the characteristic
polynomial of its poset of layers \cite[Theorem 7.8]{LTY}. Via
\Cref{thm:charpoly,thm:arr}, this means the
Poincar\'e polynomial is determined by the Tutte polynomial of the associated
matroid scheme.
\end{remark}

\section{Dowling posets}\label{sec:dowling}

Geometric posets sometimes describe the intersection data of submanifolds in a
manifold more general than the abelian Lie groups that we considered in the
preceding section.
Dowling posets were defined in \cite{BG} to describe the combinatorial structure
associated to an orbit configuration space -- an equivariant analogue of a
configuration space. These posets generalize partition and Dowling lattices,
which are fundamental examples of geometric lattices.

To define these posets, let us fix a positive integer $n$, a finite group $G$,
and a finite $G$-set $T$. Denote $[n]:=\{1,2,\dots,n\}$. Given a subset
$B\subseteq[n]$, a $G$-coloring is a function $b:B\to G$. Define an equivalence
relation on $G$-colorings of $B$ where $(b:B\to G)\sim(b':B\to G)$ whenever
$b'=bg$ for some $g\in G$.
A partial $G$-partition of $[n]$ is a collection
$\beta=\{(B_1,\ol{b_1}),\dots,(B_\ell,\ol{b_\ell})\}$
where $\{B_1,\dots,B_\ell\}$ is a partition of some subset $U\subseteq[n]$ and
each $\ol{b_i}$ is a chosen equivalence class of $G$-colorings on $B_i$.

Let $D_n(G,T)$ be the set of pairs $(\beta,z)$ where $\beta$ is a partial
$G$-partition of $[n]$ and $z:Z_\beta\to T$ is an $T$-coloring of 
$Z_\beta:=[n]\setminus\cup_i B_i$.
This set is partially ordered via the covering relations:
\begin{itemize}
\item $(\beta\cup\{(A,\ol{a}),(B,\ol{b})\},z)\prec (\beta\cup\{(A\cup
B),\ol{a\cup bg})\},z)$ whenever $g\in G$, and 
\item $(\beta\cup\{(B,\ol{b})\},z)\prec(\beta,z')$ whenever $z':B\cup Z_\beta\to
T$ satisfies $z'|_{Z_\beta}=z$ and $z'|_B=f\circ b$ for some $G$-equivariant
function $f:G\to T$.
\end{itemize}
When $|T|=1$, $D_n(G,T)$ is a Dowling lattice.

The Dowling poset $D_n(G,T)$ is bounded below by the trivial partition of $[n]$
into singletons, and it is ranked with $\rk(\beta,z)=n-\ell(\beta)$.
The atoms of $D_n(G,T)$ are of the following two forms:
\begin{itemize}
\item $a_i^t=(\alpha_i^t,z_i^t)$ where $i\in[n]$, $t\in T$, $\alpha_i^t$ is the
trivial partitition of $[n]\setminus\{i\}$, and $z_i^t(i)=t$;
\item $a_{ij}(g) = (\alpha_{ij}(g),\emptyset)$ where $i,j\in[n]$ are distinct,
$g\in G$, and $\alpha_{ij}(g)$ is the partition of $[n]$ whose only nonsingleton
block is $B=\{i,j\}$ with $b(i)=gb(j)$.
\end{itemize}
It was shown in \cite[Theorem A]{BG} that $D_n(G,T)$ is locally geometric -- in
fact every closed interval is isomorphic to a product of partition and Dowling
lattices.

\begin{theorem}\label{thm:dowling}
For any positive integer $n$, finite group $G$, and finite $G$-set $T$, 
the Dowling poset $D_n(G,T)$ is a geometric poset.
\end{theorem}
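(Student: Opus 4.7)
The plan is to verify the two axioms \eqref{g1} and \eqref{g2} of \Cref{def:geom} for the Dowling poset $P = D_n(G,T)$, which is already bounded below (by the trivial singleton $G$-partition) and ranked with $\rk(\beta,z) = n - \ell(\beta)$, as noted in the preceding discussion. For axiom \eqref{g1}, I would invoke \cite[Theorem A]{BG}: every closed interval of $D_n(G,T)$ is isomorphic to a product of partition and Dowling lattices. Each factor is a geometric lattice, and a product of geometric lattices is geometric, so every closed interval—and in particular every maximal interval $[\hat 0, m]$—is a geometric lattice.

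For axiom \eqref{g2}, let $x \in P$, $A \subseteq \at(P)$, and $y \in \bigvee A$ with $\rk(x) < \rk(y) = |A|$. By \eqref{g1} the interval $[\hat 0, y]$ is a geometric lattice, and since $y$ is a minimal upper bound of $A$ in $P$, it is also the join of $A$ inside $[\hat 0, y]$; together with $|A| = \rk(y)$ this makes $A$ a basis of that geometric lattice. Partition $A = A_0 \sqcup A_1$ with $A_0 = A \cap P_{\leq x}$; if $A_1$ were empty then $y = \bigvee A_0 \leq x$, violating the rank hypothesis, so $A_1 \neq \emptyset$. The task is to produce an $a \in A_1$ with $x \vee a \neq \emptyset$. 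In the easy case where $x$ and $y$ admit a common upper bound $m$, this is immediate: every $a \in A_1$ satisfies $a \leq y \leq m$ and $x \leq m$, so $m$ witnesses $x \vee a \neq \emptyset$.

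The essential work, and the expected main obstacle, lies in the case $x \vee y = \emptyset$. Writing $x = (\beta,z)$ and $y = (\gamma,w)$, the obstruction to a common upper bound must be traceable to localized incompatibilities—an index $i \in Z_\beta \cap Z_\gamma$ with $z(i) \neq w(i)$, a $G$-coloring inconsistency on the intersection of a block of $\beta$ with a block of $\gamma$, or a mismatch on the interface between $G$-colored and $T$-colored elements. For example, if there is any $i \in Z_\gamma \setminus Z_\beta$ then the $T$-coloring atom $a_i^{w(i)} \in A$ lies in $A_1$, and a common upper bound of $x$ and $a_i^{w(i)}$ is obtained by $T$-coloring the $G$-block of $\beta$ containing $i$ via a $G$-equivariant function matching $w(i)$; if instead $Z_\gamma \subseteq Z_\beta$ with matching colorings and some $G$-block $B$ of $\gamma$ is not entirely contained in a single $G$-block of $\beta$, then the spanning-tree portion of $A$ on $B$ must include a merging atom $a_{jk}(g)$ with $j,k$ in different $G$-blocks of $\beta$, and that atom admits a common upper bound with $x$ by merging those two blocks. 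The plan is to exhaust the possible obstructions in this style: case by case, exploit the fact that $A$ is a basis (its $T$-coloring atoms covering all of $Z_\gamma$ and its merging atoms forming spanning trees on each $G$-block of $\gamma$) to extract a single atom $a \in A_1$ whose support in $[n]$ sidesteps the incompatibility. Verifying that every conceivable obstruction can be dodged by some basis atom is the technical heart of the argument.

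An alternative, more structural route that would bypass this case analysis is to realize $D_n(G,T)$ directly as the poset of flats of a matroid scheme arising as the quotient of a geometric semilattice by a translative, cofinite group action—a perspective consistent with the origin of Dowling posets in \cite{BG} as describing intersection data of orbit configuration spaces. Under that approach, \Cref{thm:Gsemi} would produce the matroid scheme and \Cref{thm:GP=MS} would automatically yield geometricity of its poset of flats. Establishing this realization rigorously would itself require constructing the relevant $G$-semimatroid explicitly, but it would trade the delicate case analysis above for a one-time translation between the combinatorial and geometric pictures.
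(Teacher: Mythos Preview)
Your plan for \eqref{g1} matches the paper: cite \cite[Theorem~A]{BG}. For \eqref{g2} your strategy is the contrapositive of the paper's, but the paper's packaging is tighter and avoids the obstruction-by-obstruction enumeration you anticipate. The paper argues by contradiction: assume every $a\in A$ satisfies $a\leq x$ or $a\vee x=\emptyset$, and record what this forces for each atom type. If $a=a_i^t$, then necessarily $a_i^u\leq x$ for some $u\in T$ (the only way $a_i^t$ has no upper bound with $x$ is that $i$ is already $T$-colored in $x$); if $a=a_{ij}(g)$, then either $a_{ij}(h)\leq x$ for some $h$ or $a_i^u\leq x$ for some $u$. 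Since $A$ is a basis of the geometric lattice $[\hat0,y]$, one may order $A$ so that the ``new'' indices $i_1,\dots,i_m$ (with $m=\rk(y)$) are distinct; replacing each parameter $g_k$ or $t_k$ by the witnessed $h_k$ or $u_k$ yields an independent family of atoms all below $x$, whose join $z\leq x$ still has rank $m$, contradicting $\rk(x)<\rk(y)$. No split into $x\vee y\neq\emptyset$ versus $x\vee y=\emptyset$ is needed.

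There is also a slip in your sketch of the hard case: from $i\in Z_\gamma\setminus Z_\beta$ you infer $a_i^{w(i)}\in A$, but a basis $A$ need not contain that particular atom---$i$ could instead be merged into a block by some $a_{ij}(g)\in A$ and then $T$-colored via some $a_j^{t}\in A$. Your case analysis would therefore have to track which atom of $A$ actually accounts for $i$, and this is precisely the bookkeeping that the paper's ``replace the parameters'' argument handles uniformly.

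On your alternative route through $G$-semimatroids: bear in mind \Cref{ex:nonpos}, which shows that not every matroid scheme is such a quotient, so this cannot be assumed. For Dowling posets the orbit-configuration-space origin makes the realization plausible, but setting it up rigorously (constructing the covering semimatroid and checking translativity and cofiniteness for arbitrary $G$ and $T$) is real work, and the paper opts for the short direct verification instead.
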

\begin{proof}
Using \cite[Theorem A]{BG}, we need only check \eqref{g2}.
Let $x\in D_n(G,T)$, $A\subseteq\at(D_n(G,T))$, and
$y\in\bigvee A$, such that $\rk(x)<\rk(y)=|A|$. 
Suppose for contradiction that for every $a\in A$, either $a\leq x$ or $a\vee
x=\emptyset$.
If $a=a_i^t$, then this implies $a_i^u\leq x$ for some $u\in T$. If
$a=a_{ij}(g)$, then this implies either (1) $a_{ij}(h)\leq x$ for some $h\in G$
or (2) $a_i^u\leq x$ for some $u\in T$.
Then we can write 
\[A=\{a_{i_1j_1}(g_1),\dots,a_{i_rj_r}(g_r),a_{i_{r+1}j_{r+1}}(g_{r+1}),\dots,
a_{i_s,j_s}(g_s),a_{i_{s+1}}^{t_{s+1}},\dots,a_{i_m}^{t_m}\}\]
where $m=\rk(y)$, $i_1,\dots,i_m$ are distinct, and there exist
$h_1,\dots,h_r\in G$ and $u_{r+1},\dots,u_m\in T$ with
\[z\in a_{i_1j_1}(h_1)\vee\cdots\vee a_{i_rj_r}(h_r)\vee a_{i_{r+1}}^{u_{r+1}}
\vee\cdots\vee a_{i_m}^{u_m}\]
such that $z\leq x$. 
However, this implies $\rk(x)\geq \rk(z)=m=\rk(y)$, contradicting 
$\rk(x)<\rk(y)$.
\end{proof}

\begin{example}\label{ex:dowling}
There are two actions of $G=\Z_2$ on the set $T=\{\pm1\}$, one trivial and one
nontrivial. When $n=2$, these give rise to the two Dowling posets which are
depicted on the top of \Cref{fig:dowling}, and their corresponding matroid schemes
are depicted on the bottom of the same figure. 

The characteristic polynomial of a Dowling poset does
not depend on the action of $G$ on $T$, in fact it only depends on the integers
$n$, $|G|$, and $|T|$. This means, for instance, that the two Dowling posets in
\Cref{fig:dowling} have the same characteristic polynomial which is
$\chi(t) = (t-2)(t-4)$.
On the other hand, their matroid schemes have different Tutte polynomials, which are
\[T_{\M_1}(\x,\y) = \x^2+4\x+3+4\y+2\y^2 
\hspace{1cm} T_{\M_2}(\x,\y) = \x^2+4\x+1+4\y.\]

\end{example}

\begin{figure}[ht]
\begin{tikzpicture}[scale=.95]
\solidnodes
\node (0) at (0,0.2) {};
\foreach \x in {1,2,3,4,5,6} {
\node (\x) at (\x-3.5,1.5) {};
}
\node (++) at (-2,3) {};
\node (--) at (2,3) {};
\node (+-) at (-.75,3) {};
\node (-+) at (.75,3) {};
\draw[-]
(1)--(++)--(2)--(0)--(3)--(++)--(4)--(0)--(5)--(--)--(6)--(0)--(1)--(+-)--(5);
\draw[-] (2)--(-+)--(6);
\draw[-] (3)--(--)--(4);
\end{tikzpicture}
\hspace{3cm}
\begin{tikzpicture}[scale=.95]
\solidnodes
\node (0) at (0,0.2) {};
\foreach \x in {1,2,3,4,5,6} {
\node (\x) at (\x-3.5,1.5) {};
}
\node (++) at (-2,3) {};
\node (--) at (2,3) {};
\node (+-) at (-.75,3) {};
\node (-+) at (.75,3) {};
\draw[-] (1)--(0)--(6)--(--)--(5)--(0)--(4)--(+-)--(1)--(++)--(2)--(0)--(3)--(++);
\draw[-] (2)--(-+)--(4);
\draw[-] (3)--(--);
\draw[-] (6)--(-+);
\draw[-] (+-)--(5);
\end{tikzpicture}

\vspace{2mm}

\begin{tikzpicture}[scale=.9]
\tikzstyle{every node}=[draw,circle,inner sep=1pt,scale=.8]
\node (0) at (0,0.2) {\footnotesize 0};
\foreach \x in {1,2,3,4,5,6} {
\node (\x) at (\x-3.5,1.5) {\footnotesize 1};
\draw[-] (0)--(\x);
\node (a\x) at (0.5*\x-4,3) {\footnotesize 2};
\node (b\x) at (0.5*\x+0.5,3) {\footnotesize 2};
}
\node (u) at (-2.25,5.8) {\footnotesize 2};
\node (v) at (2.25,5.8) {\footnotesize 2};
\foreach \x in {1,2,3,4} {
\node (c\x) at (0.5*\x-3.5,4.5) {\footnotesize 2};
\node (d\x) at (0.5*\x+1,4.5) {\footnotesize 2};
\draw[-] (c\x)--(u);
\draw[-] (d\x)--(v);
}
\foreach \x in {1,2} {
\draw[-] (c\x)--(a1)--(\x);
\draw[-] (d\x)--(b1);
}
\foreach \x in {1,3} {
\draw[-] (c\x)--(a2)--(\x);
\draw[-] (d\x)--(b2);
}
\foreach \x in {2,3} {
\draw[-] (c\x)--(a3)--(\x);
\draw[-] (d\x)--(b3);
}
\foreach \x in {1,4} {
\draw[-] (c\x)--(a4)--(\x);
\draw[-] (d\x)--(b4);
}
\foreach \x in {2,4} {
\draw[-] (c\x)--(a5)--(\x);
\draw[-] (d\x)--(b5);
}
\foreach \x in {3,4} {
\draw[-] (c\x)--(a6)--(\x);
\draw[-] (d\x)--(b6);
}
\draw[-] (3)--(b1)--(4)--(b4)--(5)--(b2)--(3)--(b3)--(6)--(b6)--(5);
\draw[-] (6)--(b5)--(4);
\node (+-) at (-.25,3) {\footnotesize 2};
\node (-+) at (.25,3) {\footnotesize 2};
\draw[-] (1)--(+-)--(5);
\draw[-] (2)--(-+)--(6);
\end{tikzpicture}
\hspace{1cm} 
\begin{tikzpicture}[scale=.9]
\tikzstyle{every node}=[draw,circle,inner sep=1pt,scale=.8]
\node (0) at (0,0.2) {\footnotesize 0};
\foreach \x in {1,2,3,4,5,6} {
\node (\x) at (\x-3.5,1.5) {\footnotesize 1};
\draw[-] (0)--(\x);
}
\node (a) at (-3,4.5) {\footnotesize 2};
\node (b) at (-1,4.5) {\footnotesize 2};
\node (c) at (1,4.5) {\footnotesize 2};
\node (d) at (3,4.5) {\footnotesize 2};
\foreach \x in {1,2,3} {
\node (a\x) at (0.5*\x-4,3) {\footnotesize 2};
\draw[-] (a\x)--(a);
}
\draw[-] (a1)--(1)--(a2)--(3)--(a3)--(2)--(a1);
\foreach \x in {1,2,3} {
\node (b\x) at (0.5*\x-2,3) {\footnotesize 2};
\draw[-] (b\x)--(b);
}
\draw[-] (b1)--(1)--(b2)--(6)--(b3)--(5)--(b1);
\foreach \x in {1,2,3} {
\node (c\x) at (0.5*\x,3) {\footnotesize 2};
\draw[-] (c\x)--(c);
}
\draw[-] (c1)--(2)--(c2)--(6)--(c3)--(4)--(c1);
\foreach \x in {1,2,3} {
\node (d\x) at (0.5*\x+2,3) {\footnotesize 2};
\draw[-] (d\x)--(d);
}
\draw[-] (d1)--(3)--(d2)--(6)--(d3)--(5)--(d1);
\end{tikzpicture}
\caption{The two Dowling posets (top) of \Cref{ex:dowling} and their corresponding matroid schemes (bottom).}
\label{fig:dowling}
\end{figure}

\end{document}